\theoremstyle{plain}
\newtheorem{theorem}{Theorem}[section]
\newtheorem{lemma}[theorem]{Lemma}
\newtheorem*{remark}{Remark}
\newtheorem{as}{Assumption}
\pgfplotsset{compat=1.13}
\renewcommand{\P}{\mathbb{P}} 
\newcommand{\E}{\mathbb{E}} 
\newcommand{\N}{\mathbb{N}} 
\newcommand{\R}{\mathbb{R}} 
\newcommand{\limn}{\lim_{n \to \infty}}
\newcommand{\Addresses}{{%
  \bigskip
  \footnotesize

  N.~Gantert, \textsc{Fakultät für Mathematik, Technische Universität München, 
  Boltzmannstr.~3, 85748 Garching, Germany}\par\nopagebreak
  \textit{E-mail address:} \texttt{gantert@ma.tum.de}
  
  \medskip
  
  T.~Höfelsauer, \textsc{Fakultät für Mathematik, Technische Universität München, 
  Boltzmannstr.~3, 85748 Garching, Germany}\par\nopagebreak
  \textit{E-mail address:} \texttt{thomas.hoefelsauer@tum.de}

}}
\begin{document}
\title{Large deviations for the maximum of a branching random walk}
\author{Nina Gantert and Thomas H\"ofelsauer}
\maketitle

\begin{abstract}
We consider real-valued branching random walks and prove a large deviation result for the position of the rightmost particle. The position of the rightmost particle is the maximum of a collection of a random number of dependent random walks. We characterise the rate function as the solution of a variational problem.
We consider the same random number of independent random walks, and show that the maximum of the branching random walk is dominated by the maximum of the independent random walks. 
For the maximum of independent random walks, we derive a large deviation principle as well.
It turns out that the rate functions for upper large deviations coincide, but in general the rate functions for lower large deviations do not.
 
\smallskip
\noindent \textbf{Keywords:} branching random walk, large deviations

\smallskip
\noindent \textbf{AMS 2000 subject classification:} 60F10, 60J80, 60G50. 
\end{abstract}

\section{Introduction} \label{introduction}
We study the maximum of a branching random walk in discrete time. The branching random walk can be described as follows.
Given are two ingredients, an offspring distribution on $\N_0$ with weights $(p(k))_{k \in \N_0}$ and a (centred) step size distribution on $\R$.
At time $0$ the process starts with one particle at the origin. At every time $n \in \N$ each particle repeats the following two steps, independently of everything else. First, it produces offspring according to the offspring distribution $(p(k))_{k \in \N_0}$, and then it dies. Afterwards, the offspring particles move according to the step size distribution. Let $m$ be the reproduction mean of the offspring distribution. We always assume $m > 1$.
We are interested in the position of the rightmost particle at time $n$. Therefore, let $D_n$ be the set of particles at time $n$. For $v \in D_n$ denote by $S_v$ the position of particle $v$ at time $n$. The maximum of the branching random walk is defined as
\begin{equation}\label{Mndef}
M_n= \max_{v \in D_n} S_v.
\end{equation}
The asymptotics of the maximum $M_n$ are by now well-understood. Let $I$ be the rate function of the random walk. Conditioned on survival, it is well-known, see Biggins \cite{B76}, Hammersley \cite{H74} and Kingman \cite{K75}, that $M_n$ grows at linear speed $x^*$, where
\begin{equation} \label{def_speed}
x^*=\sup \bigl\{ x \geq 0 \colon I(x) \leq \log m \bigr\}.
\end{equation}
Addario-Berry and Reed \cite{ABR09} as well as Hu and Shi \cite{HS09} obtain a logarithmic second term. In \cite{A13} A{\"i}d{\'e}kon finally proves that $M_n - x^*n + c \log n$ converges in distribution, where $c>0$ is an explicit constant. Furthermore, let $\tilde{M}_n$ be the maximum of $|D_n|$ independent random walks. One can check that the speed of $(\tilde{M}_n)_n$ also equals $x^*$, see \cite[Theorem 1]{Z16} or \eqref{proof_thm_ind_RW_4}. However, compared to the branching random walk, there is a larger logarithmic correction term, see e.g.~\cite[Theorem 1]{Z16}.
Similar results were proved for branching Brownian motion by Bramson in \cite{B78}.

We investigate the exponential decay rates of the probabilities $\P \bigl( \frac{M_n}{n} \geq x \bigr)$ for $x \geq x^*$ and
$\P \bigl( \frac{M_n}{n} \leq x \bigr)$ for $x \leq x^*$. Our main result, Theorem \ref{theorem_BRW}, characterises these exponential decay rates. We consider the same question for $\tilde{M}_n$ and determine the exponential decay rates, see Theorem \ref{theorem_ind_RW}.
Interestingly, the rate functions coincide for  $x \geq x^*$, but in general they do not coincide for $x < x^*$.
  
Similar questions have been studied before.
Large deviation estimates for the maximum of branching Brownian motion have first been investigated by  Chauvin and Rouault in \cite{CR88} and very recently by 
Derrida and Shi in \cite{DS17} and \cite{DS17_2}. See also
\cite{DS16} and  \cite{S13} for extensions with coalescence and  selection or immigration, respectively.
Note that \cite{DS17_2}
also treats continuous time branching random walks. The difference to our setup is that in the time-continuous case, 
the strategies can involve the exponential waiting times, while in our setup, they can involve the branching mechanism given by the offspring distribution.

Upper large deviations for the maximum of discrete time branching random walks have been investigated in \cite{R93} in the case where every particle has at least one offspring. Recently large deviation results for the empirical distribution of the branching random walk have been obtained in \cite{LP15}, \cite{CH17}, \cite{LT17}, but they do not seem to imply our result. We also mention that in the case of a fixed number of offspring, much more precise results (describing not only the exponential decay rates) were derived in \cite{BM17}.

Our strategy of proof is rather direct. 
We compare $M_n$ and $\tilde{M}_n$ and show that $\tilde{M}_n$ stochastically dominates $M_n$ for all $n \in \N$, see Lemma~\ref{lemma_BRW_vs_ind_RW2}.

Let us now introduce the model in a more formal way and fix some notation. Let $(Z_n)_{n \in \N_0}$ be a Galton-Watson process with one initial particle and offspring law given by the weights $(p(k))_{k \in \N_0}$. Let $m=\sum_{k=1}^\infty k p(k) $ be the reproduction mean. 
The associated Galton-Watson tree is denoted by $\mathcal{T}=(V,E)$, where $V$ is the set of vertices and $E$ is the set of edges. Further, for $n \in \N$ let $D_n$ be the set of vertices in the $n$-th generation of the tree. Then, $|D_n|=Z_n$. For $v \in D_n$ the set of descendants of $v$ in the $(l+n)$-th generation is denoted by $D_l^v$. Note that $|D_l^v|$ equals $|D_l|$ in distribution. The root of $\mathcal{T}$ is called $o \in V$. For $v,w \in V$ define $[v,w]$ as the set of edges along the unique path from $v$ to $w$. We now define the locations of the particles. Let $(X_e)_{e \in E}$ be a collection of i.i.d.~random variables, i.e.~every edge of $\mathcal{T}$ is labelled with a random variable. For $v \in D_n$ the position of the particle $v$ at time $n$ is defined as $S_v=\sum_{w \in [o,v]} X_w$. For $n \in \N$ the position of the rightmost particle at time $n$ is $M_n= \max_{v \in D_n} S_v$ and if $D_n=\emptyset$, we set $M_n = - \infty$. We refer to $(M_n)_{n \in \N}$ as the maximum of the branching random walk. For $v \in D_n$, the rightmost descendant of $v$ at time $l+n$ is defined as $M_l^v=\max_{w \in D_l^v} S_w$. 

We also introduce a collection of i.i.d.~random variables $(X_i^j)_{i,j \in \N}$, where $X_1^1$ has the same distribution as $X_e$ for some $e \in E$. Moreover, for $j,n \in \N$ define the random walk $S_n^j= \sum_{i=1}^n X_i^j$ and the maximum of independent random walks as
\begin{equation}\label{Mntildedef}
\tilde{M}_n = \max_{1 \leq j \leq Z_n} S_n^j.
\end{equation}
In analogy to the maximum of the branching random walk, we set $\tilde{M}_n=- \infty$, if $D_n = \emptyset$. Furthermore, for $i \in \N$ let $X_i$ be an independent copy of $X_i^1$ and define $S_n= \sum_{i=1}^n X_i$.
Note that for every time $n$ the number of particles in the branching random equals the number of random walks considered for $\tilde{M}_n$. However, the positions of the particles in the branching random walk are not independent. Indeed, this dependence is such that the maximum of independent random walks stochastically dominates the maximum of the branching random walk, see Lemma~\ref{lemma_BRW_vs_ind_RW2}. 
We introduce the measure
\begin{equation} \label{def_P*}
\P^*( \cdot) = \P( \cdot \vert Z_n > 0 \ \forall n\in \N).
\end{equation}
The associated expectation is denoted by $\E^*$.
Let $(a_n)_{n \in \N}$ be a sequence of positive numbers and let $c \in (0,\infty]$ be a constant. With a slight abuse of notation for $c=\infty$, we write $a_n=\exp(-cn + o(n))$, if $ \limn \frac{1}{n} \log a_n = -c$. Note that $a_n$ decays faster than exponentially in $n$ if $c = \infty$.
The stochastic processes considered in this paper are discrete time processes. However, to increase the readability of the paper, we omit integer parts if no confusion arises.

The paper is organised as follows. In Section~2 we first introduce the rate function of the random walk and two rate functions concerning the Galton-Watson process. We further describe our assumptions. Then we state our main results in Section~3. We collect some preliminary results in Section~4 and prove the main results in Section~5.

\section{Rate functions and assumptions} \label{ratefcts}
In this section we introduce the rate functions of the random walk and the Galton-Watson process, which are needed to state our main results.
\subsection{Rate function of the random walk}
For $x \in \R$ the rate function of the random walk $(S_n)_{n \in \N}$ is defined as

\begin{equation} \label{def_I}
I(x)= \sup_{\lambda \in \R} \bigl(\lambda x - \log \E\left[e^{\lambda X_1} \right]\bigr).
\end{equation}

\begin{as} \label{assumption_RW}
There exists $\varepsilon>0$ such that $\E\bigl[ e^{\lambda X_1} \bigr] < \infty$ for all $\lambda \in (-\varepsilon, \varepsilon)$. Furthermore, for simplicity suppose that $\E[X_1]=0$.
\end{as}
Note that $\E[X_1]=0$ is not necessary for the results in Section~\ref{results}, since we could derive similar results for the collection of random variables $(X_e-\E[X_1])_{e \in E}$.
Assumption \ref{assumption_RW} ensures that $I(x)>0$ for all $x \neq 0$ and $I(x) \to \infty$ as $|x| \to \infty$. 
If Assumption \ref{assumption_RW} is satisfied, Cramér's theorem implies that the 
probabilities $\P(S_n \geq xn)$ decay exponentially in $n$ with rate $I(x)$ for $x>0$.
A proof can e.g.~be found in \cite[Theorem 3.7.4]{DZ10}.

\subsection{Rate functions of the Galton-Watson process}

First, we need to introduce some more assumptions before we can state the large deviation results.
\begin{as} \label{assumption_supercritical}
The Galton-Watson process is supercritical, i.e.~$m>1$.
\end{as}
Let $ q=\inf \bigl\{s \in [0,1] \colon \E[s^{Z_1}]=s \bigr\}$.
Note that $q$ is the extinction probability of the process $(Z_n)_{n \in \N_0}$. Assumption \ref{assumption_supercritical} implies that $q < 1$.
\begin{as} \label{assumption_ZlogZ}
The Galton-Watson process satisfies $\E[Z_1 \log Z_1] < \infty$.
\end{as}
First of all, note that Assumption \ref{assumption_ZlogZ} implies $m < \infty$. Together with Assumption \ref{assumption_RW}, this yields that the speed of the branching random walk $x^*$ defined in \eqref{def_speed} is finite. 

Due to the well-known Kesten-Stigum Theorem,  Assumption \ref{assumption_ZlogZ} implies that the Galton-Watson process grows like its expectation, see Theorem~\ref{Lemma_GWP_martingale}.

\begin{as} \label{assumption_Schroeder}
Every particle has less than two children with positive probability, i.e.~it holds that $p(0)+p(1)>0$.
\end{as}
Assumption \ref{assumption_Schroeder} is often referred to as Schröder case, whereas the case $p(0)+ p(1)=0$ is called Böttcher case.

If Assumptions \ref{assumption_supercritical} and \ref{assumption_ZlogZ} are satisfied, there is a large deviation result for the probability that $(Z_n)_{n \in \N_0}$ grows at most subexponentially. A sequence $(a_n)_{n \in \N}$ is called subexponential, if $a_n e^{-\varepsilon n} \to 0$ as $n \to \infty$ for all $\varepsilon>0$. 
Define
\begin{equation}\label{def_rho}
\rho: = -\log \E[Z_1 q^{Z_1-1}] \in (0, \infty].
\end{equation}
Note that $\rho=-\log p(1)$ if $p(0)=0$ (and therefore also $q=0$). In particular, $\rho <\infty$ if and only if Assumption \ref{assumption_Schroeder} is satisfied. Consider the set
\begin{equation} \label{def_A}
A= \bigl\{l \in \N \colon \exists n \in \N \text{ such that } \P(Z_n=l)>0 \bigr\}
\end{equation}
containing all positive integers $l$ such that there are $l$ particles at some time $n$ with positive probability.
\begin{theorem}\label{lemma_LDP_rho}
Let Assumption \ref{assumption_supercritical} and \ref{assumption_ZlogZ} hold. Then, for every $k \in A$ we have 
\begin{equation*}
\limn \frac{1}{n} \log \P^* \bigl(Z_n =k \bigr) = - \rho.
\end{equation*} 
Moreover, for every subexponential sequence $(a_n)_{n \in \N}$ such that $a_n \to \infty$ as $n \to \infty$,
\begin{equation*}
\limn \frac{1}{n} \log \P^* \bigl(Z_n \leq a_n \bigr) = - \rho.
\end{equation*}
\end{theorem}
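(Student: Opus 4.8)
First I would reduce the conditioned statement to an unconditioned one. Since the subtrees rooted at the vertices of $D_n$ are i.i.d.\ Galton--Watson trees, $\P(Z_m>0\ \forall m\mid Z_n=k)=1-q^k$ for $k\ge1$, so $\P^*(Z_n=k)=\P(Z_n=k)(1-q^k)/(1-q)$ and, by summation, $(1-q)\,\P^*(1\le Z_n\le a_n)\le\P(1\le Z_n\le a_n)\le\P^*(1\le Z_n\le a_n)$; also $\P^*(Z_n\le a_n)=\P^*(1\le Z_n\le a_n)$ since the process survives under $\P^*$. Hence it suffices to show $\limn\frac1n\log\P(1\le Z_n\le a_n)=-\rho$ for subexponential $a_n\to\infty$, and $\limn\frac1n\log\P(Z_n=k)=-\rho$ for $k\in A$ (the upper bound for the latter following from the former applied to a slowly growing subexponential sequence, the lower bound being compared against it). The Böttcher case $p(0)+p(1)=0$, where $\rho=\infty$, is immediate: then $Z_n\ge2^n$ a.s., so both quantities vanish for large $n$. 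So assume from now on the Schröder case; then the offspring generating function $f$ has $0<f'(q)=e^{-\rho}<1$, and for $f_n:=f^{\circ n}$ one has $f_{n+1}=f\circ f_n$, $f_n(q)=q$, $q_n:=\P(Z_n=0)=f_n(0)\uparrow q$, and by the chain rule $f_n'(q)=\prod_{j=0}^{n-1}f'(f_j(q))=(f'(q))^n$.

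For the lower bound I would use a one-step decomposition: forcing the root to have $i$ children, exactly one of whose subtrees has population $k$ at time $n$ and the other $i-1$ of which are empty, gives $\P(Z_{n+1}=k)\ge\P(Z_n=k)\sum_{i\ge1}i\,p(i)\,q_n^{\,i-1}=f'(q_n)\,\P(Z_n=k)$. Pick $n_0\ge1$ with $\P(Z_{n_0}=k)>0$ (possible as $k\in A$); in the Schröder case $f'(q_n)>0$ for $n\ge1$, so iterating, taking logarithms, dividing by $n$, and using that the averages of $\log f'(q_n)$ tend to $\log f'(q)=-\rho$, one obtains $\liminf_n\frac1n\log\P(Z_n=k)\ge-\rho$. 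For $a_n\to\infty$ and any fixed $k\in A$ this transfers via $\P(1\le Z_n\le a_n)\ge\P(Z_n=k)$ once $a_n\ge k$.

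The upper bound is the main obstacle. The subtlety is that $a_n$ may grow faster than any polynomial (e.g.\ $a_n=e^{\sqrt n}$), so a direct estimate like $\P(Z_n=k)\le q^{-k}\sum_{j\ge1}\P(Z_n=j)q^j=q^{-k}(q-q_n)$ summed over $k\le a_n$ is useless. Instead I would cut the tree at generation $n-\ell_n$ with $\ell_n:=\lceil2\log a_n/\log m\rceil$: this is $o(n)$ because $a_n$ is subexponential, while $\ell_n\to\infty$ and $m^{\ell_n}/a_n\to\infty$. Writing $Z_n=\sum_{v\in D_{n-\ell_n}}Z^{(v)}_{\ell_n}$ with i.i.d.\ copies $Z^{(v)}_{\ell_n}$ of $Z_{\ell_n}$, the event $\{1\le Z_n\le a_n\}$ forces every one of these $Z_{n-\ell_n}$ subtrees to have at most $a_n$ vertices at relative generation $\ell_n$ and at least one of them to be nonempty, whence
\begin{equation*}
\P(1\le Z_n\le a_n)\ \le\ f_{n-\ell_n}\bigl(\P(Z_{\ell_n}\le a_n)\bigr)-f_{n-\ell_n}(q_{\ell_n}).
\end{equation*}
By the Kesten--Stigum theorem (Theorem~\ref{Lemma_GWP_martingale}) we have $Z_{\ell_n}/m^{\ell_n}\to W$ with $\P(W=0)=q$, and since $a_n/m^{\ell_n}\to0$ this forces $\P(Z_{\ell_n}\le a_n)\to q$; as also $q_{\ell_n}\to q$, the mean value theorem bounds the right-hand side by $f_{n-\ell_n}'(\xi_n)$ for some $\xi_n$ between $q_{\ell_n}$ and $\P(Z_{\ell_n}\le a_n)$, so $\xi_n\to q$.

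It then remains to show that $\frac1k\log f_k'(\xi_k)\to\log f'(q)=-\rho$ whenever $\xi_k\to q$. For this I would again use $f_k'(\xi)=\prod_{j=0}^{k-1}f'(f_j(\xi))$ together with the fact that, on a small neighbourhood of $q$ (which $f$ maps into itself and on which $\sup f'<1$, since $f'(q)<1$), the iterates $f_j(\xi)$ converge to $q$ geometrically, uniformly in $\xi$; continuity of $f'$ at $q$ and an averaging argument then give the claim. Plugging back, $\frac1n\log\P(1\le Z_n\le a_n)\le\frac{n-\ell_n}{n}\bigl(-\rho+o(1)\bigr)\to-\rho$, which together with the lower bound proves both statements. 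The technical heart is precisely this last uniform control of $f_k'$ in a shrinking neighbourhood of the fixed point $q$; the rest is bookkeeping.
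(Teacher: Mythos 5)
The paper itself gives no proof of this theorem: the local statement $\lim_n \frac1n\log\P^*(Z_n=k)=-\rho$ is quoted from Athreya--Ney \cite[Ch.~1, Sec.~11, Thm.~3]{AN04}, and the statement for subexponential $a_n$ is quoted from \cite[Theorem 3.1]{BB13}. Your argument is therefore necessarily a different route, and I believe it is correct and self-contained. The reduction from $\P^*$ to $\P$ via $\P^*(Z_n=k)=\P(Z_n=k)(1-q^k)/(1-q)$ is right, the Böttcher case is indeed trivial ($Z_n\ge 2^n$ kills both probabilities for large $n$), and in the Schröder case your two halves are sound: the lower bound via the first-generation decomposition $\P(Z_{n+1}=k)\ge f'(q_n)\P(Z_n=k)$ plus Cesàro convergence of $\log f'(q_n)\to\log f'(q)=-\rho$, and the upper bound via cutting at generation $n-\ell_n$, bounding $\P(1\le Z_n\le a_n)\le f_{n-\ell_n}(\P(Z_{\ell_n}\le a_n))-f_{n-\ell_n}(q_{\ell_n})\le f_{n-\ell_n}'(\xi_n)$ with $\xi_n\to q$. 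The two points that carry the weight, and which you correctly identify, are (i) the uniform geometric attraction $|f_j(\xi)-q|\le c^j|\xi-q|$ on a neighbourhood of $q$ where $\sup f'\le c<1$ (this uses $f'(q)<1$, which holds for any supercritical process with $q<1$, and $f'(q)>0$, which is exactly the Schröder condition), giving $\frac1k\log f_k'(\xi_k)\to-\rho$; and (ii) the fact that $\P(Z_{\ell_n}\le a_n)\to q$ because $a_n/m^{\ell_n}\to 0$ and $W>0$ a.s.\ on survival --- this is precisely where Assumption \ref{assumption_ZlogZ} (Kesten--Stigum, Theorem \ref{Lemma_GWP_martingale}) enters. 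Compared with the cited sources, your proof buys less (only the exponential rate, not the sharp asymptotics $\P(Z_n=k)\sim c_k e^{-\rho n}$ of \cite{AN04}, nor the random-environment generality of \cite{BB13}) but is elementary and matches the level of detail of the rest of the paper; if written out, the only steps requiring real care are the uniformity in (i) and the limsup argument behind (ii).
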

A proof of the first statement in Theorem~\ref{lemma_LDP_rho} can be found in \cite[Chapter 1, Section 11, Theorem 3]{AN04}. The second statement is a consequence of of \cite[Theorem 3.1]{BB13}.

For $x \in [0, \log m]$ define the rate function of the Galton-Watson process as
\begin{equation} \label{def_I_GW}
I^{\text{GW}}(x)= \rho \bigl(1-x (\log m)^{-1} \bigr) 
\end{equation} 
Note that $I^{\text{GW}}(x)>0$ for all $x < \log m$.
\begin{theorem}\label{lemma_LDP_Zn}
Under Assumptions \ref{assumption_supercritical} and \ref{assumption_ZlogZ} we have for $x \in [0, \log m]$
\begin{equation*}
\limn \frac{1}{n} \log \P^* \bigl( Z_n \leq e^{xn} \bigr) = -  I^{\textnormal{GW}}(x).
\end{equation*}
\end{theorem}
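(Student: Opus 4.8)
The plan is to establish matching upper and lower bounds for $\frac1n\log\P^*(Z_n\le e^{xn})$ by splitting the first $n$ generations at time $k=rn$, where $r:=1-x(\log m)^{-1}\in[0,1]$. This is the natural cut because $I^{\textnormal{GW}}(x)=\rho r$, while the expected number of descendants of a single particle after the remaining $n-k=(1-r)n$ generations is exactly $m^{(1-r)n}=e^{xn}$, the target size. Both bounds then reduce to one statement each: for the lower bound, that it costs essentially $e^{-\rho k}$ to keep the population bounded up to time $k$ and then let it grow freely (which lands below $e^{xn}$ with non-vanishing probability); for the upper bound, that if the population at time $k$ is appreciable then it is exponentially (in that population) unlikely to end up below $e^{xn}$ at time $n$. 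I focus on $x\in(0,\log m)$; the endpoint $x=0$ is essentially Theorem~\ref{lemma_LDP_rho}, and $x=\log m$ follows from the Kesten--Stigum theorem, since then $I^{\textnormal{GW}}(x)=0$ and $\P^*(Z_n\le m^n)=\P^*(Z_n/m^n\le 1)\to\P^*(W\le 1)>0$ (in the Schr\"oder case $\operatorname{essinf}W=0$ under $\P^*$ and $W$ has no atom at $1$).

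For the lower bound I fix a small $\eta>0$ and a fixed integer $\ell_0\in A$, restrict to the event $\{Z_{(r+\eta)n}=\ell_0\}$, and then let the $\ell_0$ independent subtrees evolve for $j:=(1-r-\eta)n$ further generations. Since $m^{j}=e^{(x-\eta\log m)n}\ll e^{xn}$, a first-moment (Markov) estimate gives $\P\bigl(Z_j^{(i)}\le e^{xn}/\ell_0\bigr)\to 1$ and $\P\bigl(Z_j^{(i)}\le e^{xn}/\ell_0,\ \text{subtree }i\text{ survives}\bigr)\to 1-q>0$; intersecting over the fixed number $i=1,\dots,\ell_0$ of independent subtrees forces $Z_n\le e^{xn}$ while guaranteeing survival. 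Combined with $\P\bigl(Z_{(r+\eta)n}=\ell_0\bigr)\ge(1-q)\P^*\bigl(Z_{(r+\eta)n}=\ell_0\bigr)=e^{-\rho(r+\eta)n+o(n)}$ from the first part of Theorem~\ref{lemma_LDP_rho}, this gives $\liminf_n\frac1n\log\P^*(Z_n\le e^{xn})\ge-\rho(r+\eta)$, and letting $\eta\downarrow 0$ yields the bound $-I^{\textnormal{GW}}(x)$.

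For the upper bound I condition on $Z_{rn}$, using that on survival $Z_{rn}\ge 1$:
\begin{equation*}
\P^*(Z_n\le e^{xn})\le(1-q)^{-1}\sum_{\ell\ge 1}\P(Z_{rn}=\ell)\,\P\Bigl(\textstyle\sum_{i=1}^{\ell}Z_j^{(i)}\le e^{xn}\Bigr),
\end{equation*}
with $j=(1-r)n$ and $Z_j^{(i)}$ i.i.d.\ copies of $Z_j$. The key is a bound on the conditional probability that is uniform in $\ell$: by the exponential Chebyshev inequality with parameter $t=\tau/m^{j}$ and using $m^{j}=e^{xn}$,
\begin{equation*}
\P\Bigl(\textstyle\sum_{i=1}^{\ell}Z_j^{(i)}\le e^{xn}\Bigr)\le e^{\tau}\,\E\bigl[e^{-\tau Z_j/m^{j}}\bigr]^{\ell}.
\end{equation*}
Since $Z_j/m^{j}\to W$ a.s.\ and $\E\bigl[e^{-\tau W}\bigr]\downarrow\P(W=0)=q$ as $\tau\to\infty$ (the identity $\P(W=0)=q$ being where Assumption~\ref{assumption_ZlogZ} enters), I first fix $\tau$ large enough that $\E[e^{-\tau W}]<q+\varepsilon$, and then $\E[e^{-\tau Z_j/m^{j}}]<q+\varepsilon=:s\in(q,1)$ for all $n$ large. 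Summing, $\P^*(Z_n\le e^{xn})\le(1-q)^{-1}e^{\tau}\bigl(f_{rn}(s)-f_{rn}(0)\bigr)$, where $f_n$ is the $n$-fold iterate of the offspring generating function $f$. A mean-value argument around the fixed point $q$ — using $f'(q)=\E[Z_1q^{Z_1-1}]=e^{-\rho}$ and $f_n(s),f_n(0)\to q$ — gives $|f_n(s)-q|+|f_n(0)-q|=e^{-\rho n+o(n)}$, hence $f_{rn}(s)-f_{rn}(0)=e^{-\rho rn+o(n)}$ and $\limsup_n\frac1n\log\P^*(Z_n\le e^{xn})\le-\rho r=-I^{\textnormal{GW}}(x)$.

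The step I expect to be the main obstacle is getting the upper bound uniformly over all population sizes $\ell$ at time $rn$: a crude split at a subexponential threshold (invoking only Theorem~\ref{lemma_LDP_rho}) controls the small-$\ell$ contribution but leaves an intermediate range of $\ell$ whose cost is not manifestly of order $e^{-\rho rn}$. Handling all $\ell$ simultaneously through the Laplace-transform bound $\E[e^{-\tau Z_j/m^{j}}]^{\ell}\le s^{\ell}$ and recognising $\sum_{\ell}\P(Z_{rn}=\ell)s^{\ell}=f_{rn}(s)-f_{rn}(0)$ is what resolves it; the remaining work is the elementary estimate on the geometric speed of convergence $f_n\to q$, which is precisely where the Schr\"oder exponent $\rho=-\log f'(q)$, and hence the explicit form of $I^{\textnormal{GW}}$, emerges. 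In the Böttcher case $\rho=\infty$ and the same arguments give super-exponential decay on both sides, consistent with $I^{\textnormal{GW}}(x)=\infty$.
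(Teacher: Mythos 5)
Your proof is correct, but note that the paper does not actually prove Theorem~\ref{lemma_LDP_Zn} at all: it is quoted as a consequence of an external reference (Theorem 3.2 of [BB13]). So your contribution is a self-contained reconstruction, and it follows what is essentially the standard route to lower deviations for supercritical Galton--Watson processes. The individual steps all check out: the cut at time $rn$ with $r=1-x(\log m)^{-1}$ so that $m^{(1-r)n}=e^{xn}$; the lower bound via Theorem~\ref{lemma_LDP_rho} at time $(r+\eta)n$ plus a first-moment estimate on the $\ell_0$ surviving subtrees; and, for the upper bound, the choice $t=\tau e^{-xn}$ in the exponential Chebyshev inequality, the use of Kesten--Stigum (Theorem~\ref{Lemma_GWP_martingale}, which is exactly where Assumption~\ref{assumption_ZlogZ} enters) to get $\E[e^{-\tau Z_j/m^j}]\le s$ for a fixed $s\in(q,1)$ and all large $j$, the resummation $\sum_{\ell\ge1}\P(Z_{rn}=\ell)s^\ell=f_{rn}(s)-f_{rn}(0)$, and the telescoping mean-value bound $|f_n(u)-q|\le\prod_k f'(\xi_k)\,|u-q|$ with $\xi_k\to q$ and $f'(q)=e^{-\rho}$. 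You correctly identify the uniformity in $\ell$ as the crux and resolve it with the generating-function trick; you also only need the easy (upper) direction of the geometric convergence $f_n\to q$, since the matching lower bound on the probability is obtained separately. The argument is arguably more elementary than invoking [BB13], at the price of rederiving a known result.

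Two endpoint remarks. At $x=\log m$ your claim reduces to $\liminf_n\P^*(W_n\le1)>0$, i.e.\ $\P^*(W<1)>0$; this holds (in the Schr\"oder case because $\operatorname{essinf}(W\mid\text{survival})=0$, and in the B\"ottcher case because $\E[W]=1$ forces $\P(W<1)>0$ unless the offspring number is deterministic, where the statement is trivial), but a sentence of justification would be worth adding. At $x=0$ the event is $\{Z_n\le1\}=\{Z_n=1\}$ under $\P^*$, and your appeal to Theorem~\ref{lemma_LDP_rho} silently assumes $1\in A$; if $1\notin A$ (e.g.\ $p(0),p(2)>0$ and all other weights zero, so $Z_n$ is always even on survival) the event is empty and the limit is $-\infty\neq-\rho$. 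This is a defect of the endpoint of the quoted theorem rather than of your argument, but it should be flagged rather than waved away.
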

This theorem is a consequence of \cite[Theorem 3.2]{BB13}. Note that there is also an upper large deviation result for $\P^* \bigl( Z_n \geq e^{xn} \bigr)$, where $x > \log m$, see e.g.~\cite[Theorem 1]{BB11}.

\section{Results} \label{results}
After defining the rate functions of the random walk and the Galton-Watson process we are now able to state our main results.

Note that $I(x^*)= \log m$ if $I(x)< \infty$ for some $x > x^*$. On the other hand, $I(x^*)< \log m$ already implies $\P(X_1>x^*)=0$. This case leads to a different shape of the rate functions, see Figure \ref{fig_rate_functions}.
Let
\begin{equation} \label{def_k*}
k^*=\inf\{k \geq 1 \colon p(k)>0 \}.
\end{equation}
Note that $k=k^*$ is the smallest positive integer, such that $\P(Z_n=k)>0$ for some $n \in \N$. Define the rate function for the maximum of independent random walks as
\begin{equation} \label{def_I_ind}
I^{\text{ind}}(x)=
\begin{cases}
I(x) - \log m & \text{for } x > x^*,\\
0 & \text{for } x=x^*,\\
\rho \bigl(1 - \tfrac{I(x)}{\log m} \bigr)  &  \text{for } 0 \leq x < x^*,\\
k^* I(x) + \rho & \text{for } x \leq 0.
\end{cases}
\end{equation}
Note that $\rho \bigl(1 - \tfrac{I(x)}{\log m} \bigr)= I^{\text{GW}}(I(x))$ for $ 0 \leq x < x^*$.
Recall the maximum $\tilde{M}_n$ of a random number of independent walks, defined in \eqref{Mntildedef}.

\begin{theorem}\label{theorem_ind_RW}
Suppose that Assumptions \ref{assumption_RW}, \ref{assumption_supercritical} and \ref{assumption_ZlogZ} are satisfied. 
 Then, the laws of $\frac{\tilde{M}_n}{n}$ under $\P^*$ satisfy a large deviation principle with rate function $I^{\text{ind}}$. More precisely,
\begin{align*}
- I^{\textnormal{ind}}(x)=
\begin{cases}
\limn \frac{1}{n} \log \P \bigl( \frac{\tilde{M}_n}{n} \geq x \bigr) & \text{for } x \geq x^*,\\
\limn \frac{1}{n} \log \P \bigl( \frac{\tilde{M}_n}{n} \leq x \bigr) & \text{for } x \leq x^*.
\end{cases}
\end{align*}
\end{theorem}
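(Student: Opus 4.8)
The plan is to prove the four cases of $I^{\textnormal{ind}}$ separately, exploiting the conditional independence of the walks $S_n^1, \dots, S_n^{Z_n}$ given $Z_n$, together with Cramér's theorem for the single walk and the Galton--Watson large deviation estimates of Theorems~\ref{lemma_LDP_rho} and \ref{lemma_LDP_Zn}. Throughout I would condition on $Z_n$ and use
\[
\P^*\bigl(\tilde M_n \geq xn\bigr) = \E^*\Bigl[ 1 - \bigl(1 - \P(S_n \geq xn)\bigr)^{Z_n} \Bigr], \qquad
\P^*\bigl(\tilde M_n \leq xn\bigr) = \E^*\Bigl[ \bigl(\P(S_n \leq xn)\bigr)^{Z_n} \Bigr].
\]

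For the upper tail $x > x^*$: since $I(x) > \log m$, a union bound gives $\P^*(\tilde M_n \geq xn) \leq \E^*[Z_n]\,\P(S_n \geq xn) = e^{(\log m)n + o(n)} e^{-I(x)n + o(n)}$, yielding the upper bound $-I^{\textnormal{ind}}(x) \geq -(I(x) - \log m)$. For the matching lower bound, restrict to the event $\{Z_n \geq e^{(\log m - \delta)n}\}$, which has $\P^*$-probability bounded below (here I would invoke the Kesten--Stigum growth, Theorem~\ref{Lemma_GWP_martingale}, referenced in the excerpt), and on that event estimate $1 - (1 - \P(S_n \geq xn))^{Z_n} \geq \tfrac12 \min(1, Z_n \P(S_n \geq xn))$; since $I(x) < \infty$ near $x$ (using $x > x^*$ and the remark that $I(x^*) = \log m$ when $I$ is finite past $x^*$) this gives the bound $e^{(\log m - \delta)n}e^{-I(x)n + o(n)}$, and letting $\delta \downarrow 0$ closes the gap. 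The case $x = x^*$ follows because $\P^*(\tilde M_n \geq x^* n) \to 1$ (the speed is $x^*$) so the rate is $0$; one also checks $\P^*(\tilde M_n \leq x^* n)$ does not decay exponentially, giving $I^{\textnormal{ind}}(x^*) = 0$ on both sides.

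For the lower tail $0 \leq x < x^*$: here the optimal strategy is that $Z_n$ is atypically small, of order $e^{I(x)n}$, and then all $\approx e^{I(x)n}$ walks independently fail to exceed $xn$, each with probability $1 - e^{-I(x)n + o(n)}$, so $(\P(S_n \leq xn))^{Z_n} \approx \exp(-e^{I(x)n} e^{-I(x)n}) = \Theta(1)$ on that event. Quantitatively, $\P^*(\tilde M_n \leq xn) \geq \P^*(Z_n \leq e^{I(x)n})\cdot \inf_{k \leq e^{I(x)n}} \P(S_n \leq xn)^k = e^{-I^{\textnormal{GW}}(I(x))n + o(n)}$ by Theorem~\ref{lemma_LDP_Zn} (noting $I(x) < \log m$ since $x < x^*$), which is the claimed lower bound. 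For the upper bound, split according to whether $Z_n \leq e^{(I(x)+\delta)n}$ or not: on the former the Galton--Watson estimate caps the probability at $e^{-I^{\textnormal{GW}}(I(x)+\delta)n+o(n)}$, while on the latter $\P(S_n \leq xn)^{Z_n} \leq (1 - e^{-I(x)n+o(n)})^{e^{(I(x)+\delta)n}} \to 0$ faster than exponentially; optimising and sending $\delta \downarrow 0$ gives the result. Finally for $x \leq 0$: the strategy combines $Z_n$ being minimal, namely exactly $k^*$ particles (probability $e^{-\rho n + o(n)}$ by Theorem~\ref{lemma_LDP_rho}), with all $k^*$ of those independent walks being below $xn$ (probability $e^{-k^* I(x) n + o(n)}$ by Cramér); the upper bound again splits on $\{Z_n = k^*\}$ versus $\{Z_n > k^*\}$, using on the latter event that having at least $k^*+1$ particles for a long time costs at least as much as... here one uses monotonicity $\P(S_n \leq xn)^{Z_n} \leq \P(S_n \leq xn)^{k^*+1}$ combined with the cost of $Z_n$ staying bounded, which is still $e^{-\rho n}$ — the precise matching requires care that $A$ contains $k^*$ and that $k^* I(x) + \rho$ is the true minimum over $l \in A$ of $l I(x) - \tfrac1n\log\P^*(Z_n = l)$ as $n\to\infty$.

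The main obstacle I anticipate is the lower-tail analysis for $0 \leq x < x^*$, specifically proving the upper bound cleanly: one must show that no intermediate behaviour of $Z_n$ (growing faster than $e^{I(x)n}$ but still subexponentially, or growing exponentially at a slower rate than $m^n$) can beat the stated rate, which amounts to checking that the function $\delta \mapsto \min\bigl(I^{\textnormal{GW}}(I(x)+\delta),\ \text{(super-exponential)}\bigr)$ is optimised at $\delta = 0$ and that the $o(n)$ errors in the conditional estimates are uniform over the relevant range of $Z_n$. A secondary technical point is justifying the interchange of the union-bound/second-moment estimates with the conditioning on $\{Z_n > 0\ \forall n\}$; since $\P(Z_n > 0\ \forall n) = 1 - q > 0$ is a fixed constant, passing between $\P$ and $\P^*$ only changes things by $O(1)$ factors and does not affect exponential rates, but I would state this reduction explicitly at the outset.
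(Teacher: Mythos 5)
Your handling of the three main regimes ($x>x^*$, $0\le x<x^*$, $x<0$) follows essentially the same route as the paper: union bound plus Kesten--Stigum for the upper tail, splitting on $\{Z_n\le e^{(I(x)\pm\delta)n}\}$ together with Theorem~\ref{lemma_LDP_Zn} for $0\le x<x^*$, and conditioning on $\{Z_n=k^*\}$ together with Theorem~\ref{lemma_LDP_rho} for $x<0$. The genuine gap is the boundary case $x=x^*$. Your claim that $\P^*(\tilde M_n\ge x^*n)\to 1$ ``because the speed is $x^*$'' is false: when $I(x^*)=\log m$, Theorem~\ref{theorem_RW_precise_asyptotics} gives $\E^*[Z_n]\,\P(S_n\ge x^*n)=O(n^{-1/2})$, so in fact $\P^*(\tilde M_n\ge x^*n)\to 0$ (the maximum sits at $x^*n$ minus a logarithmic correction; a.s.\ convergence of $\tilde M_n/n$ says nothing about the sign of $\tilde M_n-x^*n$). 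What is actually needed is that this probability decays \emph{subexponentially}, and that requires a real argument. The paper obtains it via the domination $\P(\tilde M_n\ge x^*n)\ge\P(M_n\ge x^*n)$ (Lemma~\ref{lemma_BRW_vs_ind_RW2}) and then distinguishes two cases: if $I$ is finite somewhere to the right of $x^*$, right-continuity of $x\mapsto I(x)-\log m$ at $x^*$ gives the claim; if $I(x)=\infty$ for all $x>x^*$, then $\P(X_1=x^*)=e^{-I(x^*)}$ by Lemma~\ref{lemma_prop_I}, and one embeds the Galton--Watson process of particles all of whose steps equal $x^*$, which has mean $m\,\P(X_1=x^*)\ge 1$ and therefore survives to time $n$ with at most polynomially decaying probability by Theorem~\ref{theorem_survival_critical}. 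Without an argument of this type the point $x=x^*$ is not covered.

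A secondary, fixable issue is the upper bound for $x<0$: the split into $\{Z_n=k^*\}$ and $\{Z_n>k^*\}$ does not close, because on the latter event you only retain $\P(S_n\le xn)^{k^*+1}$ with no factor $e^{-\rho n}$, and $(k^*+1)I(x)$ may well be smaller than $k^*I(x)+\rho$. You need the paper's truncation: bound
\begin{equation*}
\E^*\Bigl[\P\Bigl(\tfrac{S_n}{n}\le x\Bigr)^{Z_n}\Bigr]\le \sum_{k=1}^{K}\P\Bigl(\tfrac{S_n}{n}\le x\Bigr)^{k}\P^*(Z_n=k)+\P\Bigl(\tfrac{S_n}{n}\le x\Bigr)^{K},
\end{equation*}
use Theorem~\ref{lemma_LDP_rho} on each of the first $K$ terms, and let $K\to\infty$, which works precisely because $I(x)>0$ for $x<0$.
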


In the Böttcher case ($p(0)+p(1)=0$) we have $\rho=\infty$ and therefore $I^{\text{ind}}(x)=\infty$ for all $x<x^*$. Hence, in this case the lower deviation probabilities $\P^*( \tilde{M}_n \leq xn)$ for $x<x^*$ decay faster than exponentially in $n$.

Let us now give some intuition for the rate function $I^{\text{ind}}$ and describe the large deviation event $\{\tilde{M}_n \geq xn\}$ for some $x>x^*$, respectively $\{\tilde{M}_n \leq xn\}$ for some $x<x^*$.

For $x>x^*$, the number of particles should be larger or equal than expected, i.e.~$Z_n \geq e^{nt}$ for some $t \geq \log m$. The probability of such an event is of order $\exp(-I^{\text{GW}}(t)n + o(n))$. If there are $e^{nt}$ particles at time $n$, the probability that at least one particle reaches $xn$ is of order $\exp(-I(x)n+tn+o(n))$ for $t < I(x)$. Therefore, we need to maximize the product of these two probabilities, which amounts to minimize $I^{\text{GW}}(t)+ I(x) - t$, where $t$ runs over the interval $[\log m, I(x))$.
It turns out that the optimal value is $t=\log m$. This argument will go through for the maximum of the branching random walk.

If $0 \leq x< x^*$, the probability that one particle reaches $xn$ is of order $\exp(-I(x)n+o(n))$. Hence, for every $\varepsilon>0$, if there are less than $e^{(I(x)- \varepsilon) n}$ particles, the probability that none of these particles reaches $xn$ is close to 1. However, if there are more than $e^{(I(x)+ \varepsilon) n}$ particles, this probability decays exponentially in $n$.

If $x<0$, already the probability that a single particle is below $xn$ at time $n$ decays exponentially fast in $n$. Hence, if the number of particles $Z_n$ grows exponentially, the probability that all particles are below $xn$ at time $n$ decays faster than exponentially. Therefore, the number of particles needs to grow subexponentially. Since $\rho$ does not depend on the choice of $k$ in Theorem~\ref{lemma_LDP_rho}, there have to be only $k^*$ particles at time $n$ (provided that $\rho < \infty$).

Next, we consider the maximum of the branching random walk. For $x < x^*$ let
\begin{equation} \label{def_H}
H(x)=\inf_{t \in (0,1]} \Bigl\{ t \rho + t I \bigl(t^{-1} \bigl(x-(1-t)x^* \bigr) \bigr) \Bigr\} .
\end{equation}
Note that for $x > 0$ it suffices to take the infimum over $t \in (0,1-\frac{x}{x^*}]$. 
Define the rate function of the branching random walk as
\begin{equation} \label{def_I_BRW}
I^{\text{BRW}}(x)=
\begin{cases}
I(x) - \log m & \text{for } x > x^*,\\
0 & \text{for } x=x^*,\\
H(x) & \text{for } x < x^*.
\end{cases}
\end{equation}

\begin{theorem}\label{theorem_BRW}
Suppose that Assumptions \ref{assumption_RW}, \ref{assumption_supercritical}, \ref{assumption_ZlogZ} and \ref{assumption_Schroeder}  are satisfied. Then, the laws of $ \frac{M_n}{n}$ under $\P^*$ satisfy a large deviation principle with rate function $I^{\textnormal{BRW}}$. More precisely,
\begin{align*}
- I^{\textnormal{BRW}}(x)=
\begin{cases}
\limn \frac{1}{n} \log \P \bigl( \frac{M_n}{n} \geq x \bigr) & \text{for } x \geq x^*,\\
\limn \frac{1}{n} \log \P \bigl( \frac{M_n}{n} \leq x \bigr) & \text{for } x \leq x^*.
\end{cases}
\end{align*}
\end{theorem}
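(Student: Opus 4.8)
The plan is to establish matching exponential upper and lower bounds for the relevant probability in each of the three regimes $x>x^*$, $x=x^*$ and $x<x^*$. Since $\P^*(\cdot)\le\P(\cdot)/\P(\text{survival})$, and since $\{M_n\ge xn\}$ forces survival when $x\ge x^*$, one may work under $\P$ and convert at the end. One inequality is essentially free: the stochastic domination $\tilde M_n\succeq M_n$ of Lemma~\ref{lemma_BRW_vs_ind_RW2}, together with Theorem~\ref{theorem_ind_RW}, gives $\P(M_n\ge xn)\le\P(\tilde M_n\ge xn)=e^{-I^{\text{ind}}(x)n+o(n)}$, which for $x\ge x^*$ (where $I^{\text{ind}}=I^{\text{BRW}}$) is already the desired upper bound on $\P(M_n\ge xn)$. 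So in the upper-deviation regime it remains only to produce a matching lower bound on the probability, whereas in the lower-deviation regime both bounds require a genuine argument (there the domination yields only the non-sharp bound $\text{rate}\le I^{\text{ind}}(x)$).

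For $x>x^*$ with $I(x)<\infty$ (if $I(x)=\infty$ then $\P(M_n\ge xn)=0=e^{-I^{\text{BRW}}(x)n}$, which is consistent) I would set $Y_n=\#\{v\in D_n:S_v\ge xn\}$ and note $\E[Y_n]=m^n\,\P(S_n\ge xn)=e^{(\log m-I(x))n+o(n)}$ by the many-to-one lemma and Cramér. For the lower bound a second-moment estimate is needed, but $\E[Z_1(Z_1-1)]$ may be infinite under Assumption~\ref{assumption_ZlogZ}; the remedy is to truncate the offspring law at level $L$, producing a sub-branching-random-walk with $M_n^{(L)}\le M_n$, offspring mean $m_L\uparrow m$ and finite second factorial moment. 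Restricting $Y_n^{(L)}$ to particles whose ancestral line stays close to the optimal linear profile $j\mapsto xj$, a many-to-two computation shows that $\E[(Y_n^{(L)})^2]$ is dominated by pairs that split in the last generation, so $\E[(Y_n^{(L)})^2]\le e^{(\log m_L-I(x))n+o(n)}$, and Paley–Zygmund gives $\P(M_n^{(L)}\ge xn)\ge\E[Y_n^{(L)}]^2/\E[(Y_n^{(L)})^2]\ge e^{(\log m_L-I(x))n+o(n)}$; letting $L\to\infty$ closes this case. The point $x=x^*$ then follows by monotonicity and, in the generic case, continuity of $I$ at $x^*$ (where $I(x^*)=\log m$); in the degenerate case $\P(X_1>x^*)=0$ one runs the same second-moment argument on the paths all of whose increments equal the essential supremum, noting that $I(x^*)<\log m$ forces the associated mean $m\,\P(X_1=x^*)$ to exceed $1$.

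For $x<x^*$ the lower bound on $\P^*(M_n\le xn)$ comes from an explicit strategy, optimised over $t\in(0,1]$. With $y_t:=t^{-1}\bigl(x-(1-t)x^*\bigr)$ (which is $\le0$ in the relevant range), first force $Z_{tn}=k^*$: by Theorem~\ref{lemma_LDP_rho} this has probability $e^{-\rho tn+o(n)}$, and — using the structure behind that theorem — on this event the genealogy up to time $tn$ is essentially a single spine carrying finite side-trees that die out, so the positions of the $k^*$ particles lie within $O(\sqrt n)$ of a single random walk of length $tn$; requiring that walk to end below $y_t\,tn$ costs $e^{-I(y_t)tn+o(n)}$ by Cramér. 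Finally let the branching random walk evolve freely for the remaining $(1-t)n$ generations: since $M_{(1-t)n}/\bigl((1-t)n\bigr)\to x^*$ with only a logarithmic correction, each of the $k^*$ subtrees stays below $x^*(1-t)n$ — hence, added to $y_t\,tn$, below $xn$ — with probability bounded away from $0$. Multiplying the three contributions and taking the best $t$ gives $\P^*(M_n\le xn)\ge e^{-H(x)n+o(n)}$ (vacuously so when $\rho=\infty$).

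The matching upper bound $\P^*(M_n\le xn)\le e^{-H(x)n+o(n)}$ is the main obstacle, since it amounts to showing that no strategy improves on the variational formula $H$. The mechanism is the complementary one: fixing $t$ and conditioning on generation $(1-t)n$, write $M_n=\max_{u\in D_{(1-t)n}}\bigl(S_u+M^u_{tn}\bigr)$ with $(M^u_{tn})$ i.i.d.\ copies of $M_{tn}$, and observe that any gen-$(1-t)n$ particle whose position exceeds the sub-front level $xn-(x^*-\delta)tn$ must have $M^u_{tn}<(x^*-\delta)tn$, an event of probability at most some $\beta<1$; hence $\P(M_n\le xn)\le\E[\beta^{N}]\le\P(N\le n^2)+\beta^{n^2}$, where $N$ is the number of such particles. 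The heart of the matter is the estimate $\P(N\le n^2)\le e^{-H(x)n+o(n)}$: having only subexponentially many particles above a linear level can be paid for either by the whole population becoming subexponential for a stretch of generations — a cost governed by $\rho$ via Theorems~\ref{lemma_LDP_rho} and~\ref{lemma_LDP_Zn} — or by the surviving particles drifting below that level — a cost governed by $I$ — and, peeling over the generation at which the population recovers to exponential size and optimising the trade-off, one must recover exactly $\inf_{t\in(0,1]}\{t\rho+tI(y_t)\}=H(x)$. Getting this bookkeeping to close — that the Galton–Watson rate $\rho$ and the random-walk rate $I$ combine into precisely this expression, and controlling the joint behaviour of the population profile and the spinal displacement — is where the real work lies; the endpoint $x=x^*$ of the lower-deviation regime is then pinned down by continuity of $H$ together with $H(x)\to0$ as $x\uparrow x^*$.
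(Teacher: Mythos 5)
Your outline matches the paper for the easy halves: the upper bound on $\P(M_n\ge xn)$ for $x>x^*$ via Lemma~\ref{lemma_BRW_vs_ind_RW2} and Theorem~\ref{theorem_ind_RW} is exactly the paper's argument, and your lower bound for $x<x^*$ (force $Z_{tn}=k^*$ at cost $e^{-\rho tn+o(n)}$ by Theorem~\ref{lemma_LDP_rho}, let one walk drift to $y_t\,tn$, then evolve freely and use the law of large numbers for the maximum) is the paper's \eqref{proof_theorem_BRW_case_x<x*_1}; the spine/$O(\sqrt n)$ discussion is unnecessary — the paper simply pays a constant $q^{k^*-1}$ to kill $k^*-1$ subtrees and uses that, conditionally on the tree, the surviving particle's position is an honest $tn$-step walk. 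For the lower bound when $x>x^*$ you take a genuinely different route: a truncated second-moment/Paley--Zygmund argument along the optimal profile, versus the paper's bootstrap, which splits off the first $\varepsilon n$ generations, lets a single walk overshoot to $x\varepsilon n$, exploits the conditional independence of the infinite subtrees rooted at generation $\varepsilon n$ together with (U2) to gain a factor $m^{\varepsilon n}$, and iterates the resulting inequality $\liminf\frac1n\log\P^*(M_n\ge xn)\ge-\varepsilon(I(x)-\log m)+(1-\varepsilon)\liminf(\cdots)$. Your route is workable (the truncation correctly addresses the possibly infinite second factorial moment), at the price of the many-to-two computation and path localization you mention. One slip at $x=x^*$: if $I(x^*)=\log m$ but $I(x)=\infty$ for all $x>x^*$, the embedded process on maximal steps has mean exactly $m\,\P(X_1=x^*)=1$, which your dichotomy misses; this exactly-critical case is why the paper invokes Theorem~\ref{theorem_survival_critical}.

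The genuine gap is the upper bound for $x<x^*$, which you yourself flag as ``where the real work lies''. Your reduction to $\P(N\le n^2)\le e^{-H(x)n+o(n)}$, with $N$ the number of generation-$(1-t)n$ particles above a linear level, is not a proof: that is a lower-deviation estimate for the counting measure of the BRW above a ray, essentially as hard as the original statement (it is the content of the empirical-measure LDPs cited in the introduction, which the paper notes do not imply its result), and your fixed decomposition time does not localize \emph{when} the population is small, which is precisely what the optimization over $t$ in $H$ encodes. The paper's missing idea is the stopping time $T_n=\inf\{t\ge0:Z_{tn}\ge n^3\}$. Decomposing over $T_n\in(t-\varepsilon_1,t]$ on a grid, the cost $e^{-\rho(t-\varepsilon_1)n+o(n)}$ of keeping the population below the polynomial threshold up to time $tn$ comes directly from Theorem~\ref{lemma_LDP_rho}; conditionally on generation $tn$, at least $n$ of the particles carry infinite, independent subtrees up to superexponentially small errors (\eqref{proof_theorem_BRW_case_x<x*_4}--\eqref{proof_theorem_BRW_case_x<x*_5}), Lemma~\ref{lemma_BRW_vs_ind_RW1} replaces their common ancestry by a single $tn$-step walk, and since $n$ independent maxima all fall below $(x^*-\varepsilon_2)(1-t)n$ with probability tending to zero raised to the power $n$, the only term surviving on the exponential scale is $\P\bigl(S_{tn}\le(x-(1-t)(x^*-\varepsilon_2))n\bigr)=e^{-tI(y_t)n+o(n)}$. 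Summing over the grid and letting $\varepsilon_1,\varepsilon_2\to0$ yields exactly $H(x)$. Without an argument of this type your upper bound does not close.
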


In contrast to the case of independent random walks we only consider the Schröder case (Assumption \ref{assumption_Schroeder}) for the branching random walk.

\begin{remark}
Let us comment on the shape of the rate functions and on Assumption \ref{assumption_Schroeder}.
\begin{itemize}
\item[a)] One can check that the rate function $I^{\textnormal{BRW}}$ is convex. Further note that $I^{\textnormal{ind}}$ is concave on the interval $[0,x^*]$.
\item[(b)] Assumption \ref{assumption_Schroeder} is only needed for the lower deviations ($x<x^*$) in Theorem \ref{theorem_BRW}. In the Böttcher case, i.e.~if Assumption~\ref{assumption_Schroeder} is not satisfied, the strategy for lower deviations is different. We refer to \cite{CH18} for recent results.
\end{itemize}
\end{remark}

For $x > x^*$ we have $I^{\text{BRW}}(x)=I^{\text{ind}}(x)$. In this case the strategy is the same as for independent random walks. The strategy in the case $x < x^*$ goes as follows. At time $tn$ there are only $k^*$ particles, and the position of one of those particles is smaller than its expectation. All other $k^*-1$ particles are killed at time $tn$. Note that by Assumption \ref{assumption_Schroeder} either $k^*=1$ or particles may have no offspring with positive probability. Afterwards, every particle moves and branches according to its usual behaviour.

Further notice, that in contrast to the case of independent random walks, the number of particles can also grow exponentially if $x<0$. It suffices to have a small number of particles at time $tn$ for some $t \in [0,1]$.

\begin{figure}[ht]
   \centering
\begin{tikzpicture}[
    baseline=0pt,
    declare function={I_brw_1(\x)=1.7*(\x)*(\x)+2.5;},
    declare function={I_brw_2(\x)=2.5-0.625*(\x)*(\x);},
    declare function={I_brw_schwarz_1(\x)=0.3*(\x-2)*(\x-2);},
    declare function={I_brw_3(\x)=2*(\x-2)*(\x-2);}
]

\begin{axis}[
    domain=-1:3,
    axis y line =box, 
    axis x line =box, 
    xtick={0, 2},
    xticklabels={$0$, $x^*$},
    ytick={0},
    yticklabels={0},
    xmin=-1,
    xmax=3,
    ymin=0,
    ymax=6,
    x=1.2cm,
    y=0.8cm,
    smooth,
    title = {$I(x^*)= \log m$},
]

\addplot [domain=-1:0,dashed] {I_brw_1(x)} ; \label{figure_I_ind}
\addplot [domain=0:2,dashed] {I_brw_2(x)} ; 
\addplot [domain=-1:2] {I_brw_schwarz_1(x)} ; 
\addplot [domain=2:3] {I_brw_3(x)} ; \label{figure_I_BRW}

\end{axis}
\end{tikzpicture}%
\hskip 20pt
\begin{tikzpicture}[
    baseline=0pt,
    declare function={I_brw_3(\x)=1.3*(\x)*(\x)+2.6;},
    declare function={I_brw_4(\x)=2.6-0.4*(\x)*(\x);},
    declare function={I_brw_schwarz_3(\x)=0.3*(\x-2)*(\x-2);},
    declare function={I_brw_5(\x)=0;},
    declare function={I_brw_6(\x)=5;},
]

\begin{axis}[
    domain=-1:3,
    axis y line =box, 
    axis x line =box, 
    xtick={0, 2},
    xticklabels={$0$, $x^*$},
    ytick={0,1,5},
    yticklabels={0,$I^\textnormal{GW}(I(x^*))$, $\infty$},
    xmin=-1,
    xmax=3,
    ymin=0,
    ymax=6,
    smooth,
    x=1.2cm,
    y=0.8cm,
    enlarge y limits=false,clip=false,
    title = {$I(x^*)< \log m$},
]

\addplot [domain=-1:0,dashed] {I_brw_3(x)} ;
\addplot [domain=-0:2,dashed] {I_brw_4(x)} ;
\addplot [domain=-1:2] {I_brw_schwarz_3(x)} ;
\addplot [domain=2:3] {I_brw_6(x)} ;
\draw[fill] (axis cs: 2,0) circle [radius=0.06cm];
\draw[fill=white] (axis cs: 2,1) circle [radius=0.06cm];
\draw[fill=white] (axis cs: 2,5) circle [radius=0.06cm];

\end{axis}
\end{tikzpicture}

\caption{The figure shows the qualitative behaviour of the rate function of the branching random walk (\ref{figure_I_BRW}) and the rate function of independent random walks (\ref{figure_I_ind}). }
\label{fig_rate_functions}
\end{figure}
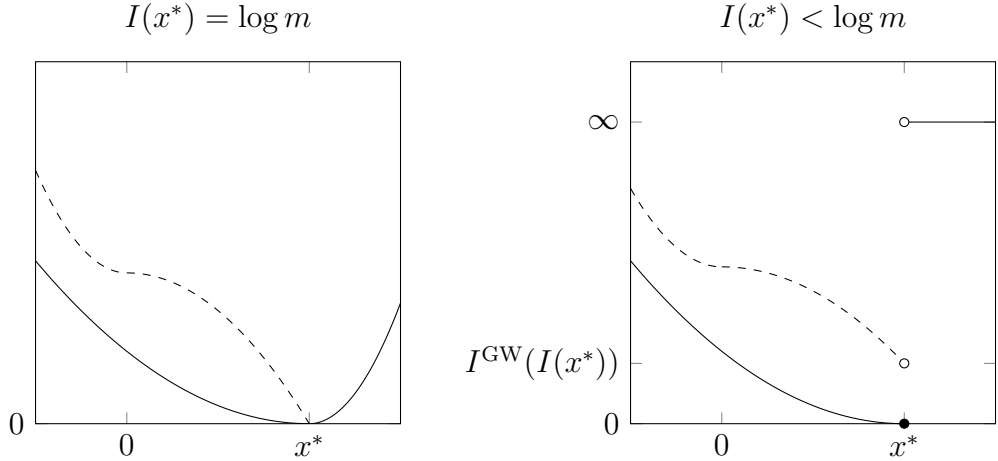

To compare the rate functions, note that the maximum of independent random walks stochastically dominates the maximum of the branching random walk, see Lemma~\ref{lemma_BRW_vs_ind_RW2}. Therefore, $I^{\text{ind}}(x) \leq I^{\text{BRW}}(x)$ for $x>x^*$, respectively $I^{\text{ind}}(x) \geq I^{\text{BRW}}(x)$ for $x<x^*$. For $x<x^*$, the inequality is in general strict. For $x>x^*$, the rate functions coincide, see the argument above.

Let us now comment on the shape of the rate functions. If $I(x)=\infty$ for some $x>x^*$, also $I^{\text{ind}}(x)=I^{\text{BRW}}(x)=\infty$. More precisely, $I(x)=\infty$ already implies $\P(X_1 \geq x- \varepsilon)=0$ for some $\varepsilon>0$ and therefore $M_n \leq x^*n $, respectively $\tilde{M}_n \leq x^*n$ almost surely.

If $I(x^*)= \log m$, then the rate functions $I^{\text{ind}}(x)$ and $I^{\text{BRW}}(x)$ are continuous at $x=x^*$. However, if $I(x^*)< \log m$, the rate functions $I^{\textnormal{ind}}(x)$ and $I^{\textnormal{BRW}}(x)$ are infinite for $x>x^*$, since $I(x)=\infty$. Therefore, they are not continuous from the right at $x=x^*$. The rate function $I^{\textnormal{BRW}}(x)$ is continuous from the left at $x=x^*$, since $I^{\textnormal{BRW}}(x) \leq \rho \bigl(1-\frac{x}{x^*} \bigr)$ for $x<x^*$. However, $I^{\textnormal{ind}}(x)$ is also not continuous from the left at $x=x^*$. In particular, $ \lim_{x\nearrow x^*} I^{\textnormal{ind}}(x) \in (0, \infty)$ if $\rho< \infty$.

An intuitive explanation of this discontinuity is the following. If there are at least $\exp\bigl(I(x^*) n \bigr)$ particles at time $n$, then $\tilde{M}_n = x^*n + o(n)$ with high probability. For a smaller linear term there have to be less particles, hence for all $x<x^*$ the probability $\P^*(\tilde{M}_n \leq xn)$ is bounded from below by the probability to have at most $\exp\bigl(I(x^*) n \bigr)$ particles at time $n$, which decays exponentially. Note that for the branching random walk, in contrast, it suffices to have a small number of particles at the beginning.

\section{Preliminaries} \label{preliminaries}
Before we prove the main results, we collect some preliminaries which are needed throughout the proofs.
As already mentioned in the introduction, conditioned on survival, the linear speed of the branching random walk equals $x^*$, i.e.
\begin{equation}\label{linspeed}
\limn \frac{M_n}{n} = x^* \quad \P^*\text{-a.s.}
\end{equation}

We often use the following simple  inequalities.
\begin{enumerate}
\item[(U1)] We have $1-x \geq e^{-ex}$ for $x \in [0,e^{-1}]$.
\item[(U2)] We have $1-(1-x)^y \geq xy(1-xy)$ for $x \in [0,1]$ and $y \geq 0$.
\end{enumerate}

\begin{proof}
Both inequalities follow after some elementary calculations.
\begin{enumerate}
\item[(U1)] The function $f(x)=1-x-e^{-ex}$ is increasing on $[0,e^{-1}]$. The claim follows, as $f(0)=0$.
\item[(U2)]
The function $g(x)=1-e^{-x}-x(1-x)$ is increasing for $x \geq 0$. As $g(0)=0$, we have $1-e^{-x} \geq x(1-x)$. Using additionally the well known inequality $1-x \leq e^{-x}$, we get
\begin{equation*}
1-(1-x)^y \geq 1- e^{-xy} \geq xy(1-xy). \qedhere
\end{equation*} 
\end{enumerate}
\end{proof}

Furthermore, we need the following simple inequality about the sum of random variables. Note that the random variables in this lemma do not have to be independent.

\begin{lemma}\label{lemma_sum_rv}
For $i \in \N$ let $X_i$ be a Bernoulli($p_i$) random variable with $\inf_{i\in \N}p_i =:p$. Then for every $a >0$ and $n \in \N$ 
\begin{equation*}
\P \left(\frac{1}{n} \sum_{i=1}^n X_i \geq a \right) \geq p-a. 
\end{equation*}
\end{lemma}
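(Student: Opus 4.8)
The statement is an elementary bound on the probability that an average of (possibly dependent) Bernoulli random variables is at least $a$. The plan is to argue by contradiction through the expectation. Suppose, for contradiction, that $\P\bigl(\frac{1}{n}\sum_{i=1}^n X_i \geq a\bigr) < p-a$; I would then bound $\E\bigl[\frac{1}{n}\sum_{i=1}^n X_i\bigr]$ from above by splitting according to whether the average is at least $a$ or not. On the event $\{\frac1n\sum X_i \geq a\}$ the average is at most $1$ (since each $X_i \leq 1$), and on the complement it is strictly less than $a$. This gives $\E\bigl[\frac1n\sum_{i=1}^n X_i\bigr] < 1\cdot(p-a) + a\cdot 1 = p$, using also $\P(\cdot)\le 1$ on the complement.

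On the other hand, linearity of expectation gives $\E\bigl[\frac1n\sum_{i=1}^n X_i\bigr] = \frac1n\sum_{i=1}^n p_i \geq \frac1n\sum_{i=1}^n p = p$, since $p_i \geq \inf_{j\in\N}p_j = p$ for every $i$. This contradicts the strict inequality from the previous paragraph, so the assumption was false and the claim follows. Alternatively, and perhaps cleaner to write, one avoids contradiction entirely: directly estimate
\[
p \le \E\Bigl[\tfrac1n\textstyle\sum_{i=1}^n X_i\Bigr] \le a + \P\Bigl(\tfrac1n\textstyle\sum_{i=1}^n X_i \geq a\Bigr),
\]
where the second inequality comes from the pointwise bound $\frac1n\sum X_i \le a + \mathds{1}\{\frac1n\sum X_i \geq a\}$ (valid because the left side is at most $1$, and when it is below $a$ the bound is trivial, while when it is at least $a$ the indicator contributes $1 \geq \frac1n\sum X_i - a$). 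Rearranging yields $\P(\frac1n\sum X_i\geq a) \geq p-a$.

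There is essentially no obstacle here: the only thing to be slightly careful about is the case $a > p$, where the bound $p-a$ is negative and the statement is vacuous, and the boundary behaviour of the pointwise inequality; both are handled automatically by the displayed chain of inequalities. No independence is used anywhere, which matches the remark preceding the lemma. I would present the second (direct) version since it is shortest and sidesteps any case analysis on whether $\P(\cdot) < p-a$ can occur.
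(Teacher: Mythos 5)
Your proof is correct. The pointwise bound $\frac1n\sum_{i=1}^n X_i \le a + \mathds{1}\{\frac1n\sum_{i=1}^n X_i \ge a\}$ is valid (each $X_i\le 1$ so the average never exceeds $1\le a+1$), and combining it with $\E[\frac1n\sum_{i=1}^n X_i]=\frac1n\sum_{i=1}^n p_i\ge p$ gives exactly the claim, with no independence used. Note that the paper does not prove this lemma itself but only cites an external reference, so there is nothing to compare against; your direct "reverse Markov" argument is a complete and self-contained substitute, and the second version you propose is indeed the cleaner one to write down.
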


A proof can be found in \cite{DGHPW18}.

For $i \in \N$ let $(a_n^i)_{n \in \N}$ be a sequence of positive numbers and $a^i= \limsup_{n \to \infty} \frac{1}{n} \log a_n^i$. Then,
for all $k \in \N$ it holds that
\begin{equation}\label{sevterms}
\limsup_{n \to \infty} \frac{1}{n} \log  \sum_{i=1}^k a_n^i = \max_{i \in \{1, \ldots , k\}} a^i.
\end{equation}
A proof can e.g.~be found in \cite[Lemma 1.2.15]{DZ10}.

We often need to estimate the number of particles at time $n$, which has expectation $m^n$. Let $W_n = \frac{Z_n}{m^n}$ and let $(\mathcal{F}_n)_{n \in \N}$ be the natural filtration of the Galton-Watson process, i.e.~$\mathcal{F}_n= \sigma(Z_1, \ldots, Z_n)$. 
The process $(W_n)_{n \in \N}$ is a martingale with respect to the filtration $(\mathcal{F}_n)_{n \in \N}$. Therefore, $W_n \to W$ almost surely, where $W$ is an almost surely finite random variable. The following well-known theorem shows that under our assumptions, the limit $W$ is non-trivial, i.e.~$\P(W=0)<1$. 
\begin{theorem}[Kesten-Stigum] \label{Lemma_GWP_martingale}
 If Assumption \ref{assumption_supercritical} and Assumption \ref{assumption_ZlogZ} are satisfied, we have 
\begin{equation*}
\E[W]=1 \quad \text{and } \quad \P(W=0)=q < 1.
\end{equation*}
\end{theorem}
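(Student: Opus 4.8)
The Kesten--Stigum theorem is a classical result, so rather than reproving it from scratch I would organise the argument around the standard size-biasing construction, which is the cleanest route under the exact hypotheses (Assumptions~\ref{assumption_supercritical} and~\ref{assumption_ZlogZ}) stated here.

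\textbf{Plan of proof.} The first half, $\P(W=0)=q$, is comparatively soft. The generating function $f(s)=\E[s^{Z_1}]$ satisfies $\E[W=0]$-type functional identities: writing $\phi(s)=\E[s^{W}]$ in a suitable sense is awkward because $W$ is continuous, so instead I would work with the extinction event directly. Let $\eta=\P(W=0)$. Conditioning on the first generation gives $\eta=\sum_k p(k)\eta^k=f(\eta)$, because $W=0$ happens if and only if each of the $Z_1$ independent subtrees has its own normalised limit equal to $0$ (the martingale limit decomposes as $W=m^{-1}\sum_{i=1}^{Z_1}W^{(i)}$ with the $W^{(i)}$ i.i.d.\ copies of $W$). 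Hence $\eta$ is a fixed point of $f$ in $[0,1]$, so $\eta\in\{q,1\}$ since $f$ has only these two fixed points in the supercritical case. It therefore remains to exclude $\eta=1$, i.e.\ to show $W$ is not a.s.\ zero; this is exactly equivalent to $\E[W]=1$ (if $\E[W]=1>0$ then $\P(W=0)<1$, and conversely once we know $\eta\in\{q,1\}$, ruling out $1$ is the whole content). So the crux is the first assertion $\E[W]=1$.

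\textbf{Main step: $\E[W]=1$ via the size-biased tree.} I would introduce the size-biased Galton--Watson tree $\widehat{\mathcal T}$: a distinguished spine of vertices $v_0,v_1,v_2,\dots$ where each spine vertex has offspring distribution $\widehat p(k)=kp(k)/m$, a uniformly chosen one of these children continues the spine, and all other children initiate independent ordinary Galton--Watson subtrees. Let $\widehat Z_n$ be the $n$-th generation size of $\widehat{\mathcal T}$ and $\widehat W_n=\widehat Z_n/m^n$. The key measure-change identity is that for every $n$, the law of the first $n$ generations under the size-biased measure $\widehat{\P}$ is absolutely continuous with respect to $\P$ with density $W_n$; equivalently, for any bounded $\mathcal F_n$-measurable $g$, $\widehat{\E}[g]=\E[g\,W_n]$. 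The standard Durrett/Lyons--Peres--Pemantle dichotomy then says: $\E[W]=1$ iff $\widehat W_n$ stays bounded $\widehat\P$-a.s., and $W=0$ a.s.\ iff $\widehat W_n\to\infty$ $\widehat\P$-a.s. Along the spine, $\log\widehat W_n = -n\log m + \sum_{j=1}^{n}\log(\text{offspring of }v_{j-1})+\cdots$, and an explicit computation shows $\widehat W_n^{-1}$ does \emph{not} converge to $0$ precisely when $\E[\log^+ \widehat\xi]<\infty$ where $\widehat\xi$ has the size-biased offspring law; and $\E[\log^+\widehat\xi]=\frac1m\E[Z_1\log^+ Z_1]<\infty$ is exactly Assumption~\ref{assumption_ZlogZ}. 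Feeding this back through the dichotomy yields $\E[W]=1$.

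\textbf{Expected obstacle and remarks.} The genuinely delicate point is the equivalence ``$\E[Z_1\log Z_1]<\infty \Leftrightarrow \widehat W_n^{-1}\not\to 0$ along the spine'': this is a Borel--Cantelli argument for the series $\sum_n\widehat\P(\widehat\xi_n > m^{n}\varepsilon)$ combined with the exact martingale/Radon--Nikodym bookkeeping, and it is where all the work sits; I would cite Lyons--Pemantle--Peres or Athreya--Ney for the details rather than reproduce them, since the theorem is quoted as ``well-known'' here. Once $\E[W]=1$ is in hand, the second conclusion is immediate: $\E[W]=1$ forces $\P(W=0)<1$, and combined with the fixed-point argument above (which pins $\P(W=0)$ to $\{q,1\}$) we conclude $\P(W=0)=q$, completing the proof.
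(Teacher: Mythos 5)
Your outline is essentially correct, but note first that the paper does not prove this theorem at all: it is quoted as the classical Kesten--Stigum theorem with a pointer to Athreya--Ney \cite[Ch.~1, Sec.~10, Thm.~1]{AN04}, so there is no in-paper argument to match. Given that, your choice to sketch the Lyons--Pemantle--Peres size-biasing proof is a genuinely different route from the reference the paper cites, which proceeds analytically via the Laplace transform of $W$ and the iterates of the generating function. The size-biased spine decomposition is the more conceptual modern argument and makes transparent why the $Z\log Z$ condition is exactly the right one (it is the $L^1$ condition for the size-biased offspring variable on a logarithmic scale), whereas the classical proof is more self-contained but less illuminating. Your soft half is fine: the decomposition $W=m^{-1}\sum_{i=1}^{Z_1}W^{(i)}$ does give $\P(W=0)=f(\P(W=0))\in\{q,1\}$, and $\E[W]=1$ then forces $\P(W=0)=q$. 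Two small imprecisions worth fixing: the sentence about ``$\E[W=0]$-type functional identities'' is garbled and should be deleted, and the claim that ruling out $\P(W=0)=1$ is ``exactly equivalent'' to $\E[W]=1$ is only true a posteriori, as part of the Kesten--Stigum dichotomy you invoke later --- on its own, $\P(W=0)<1$ does not imply $\E[W]=1$. Since you explicitly defer the Borel--Cantelli/Radon--Nikodym core of the dichotomy to the literature, exactly as the paper does, the proposal is acceptable as a proof sketch at the level of rigour the paper itself adopts for this statement.
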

A proof can e.g.~be found in \cite[Chapter 1, Section 10, Theorem 1]{AN04}. As already mentioned above, a supercritical Galton-Watson process survives with probability $1-q$. In the proof of Theorem~\ref{theorem_ind_RW} and Theorem~\ref{theorem_BRW} we also need the asymptotics of the survival probability of a critical Galton-Watson process.

\begin{theorem} \label{theorem_survival_critical}
Let $m=1$ and $p(1)<1$. Then, $\limn n\P(Z_n>0)=\frac{2}{\operatorname{Var}(Z_1)}$.
\end{theorem}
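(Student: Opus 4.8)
The plan is to run Kolmogorov's classical generating-function argument. Write $f(s)=\E[s^{Z_1}]$ for the offspring generating function and $f_n=f\circ\cdots\circ f$ ($n$-fold iterate), so that $\E[s^{Z_n}]=f_n(s)$ and $q_n:=\P(Z_n>0)=1-f_n(0)$. First I would record the elementary facts that drive the proof: since $m=f'(1)=1$ and $p(1)<1$, the process is critical and non-degenerate, so $p(0)>0$ and $\sigma^2:=\operatorname{Var}(Z_1)=f''(1)=\sum_{k\ge 2}k(k-1)p(k)\in(0,\infty)$ (finiteness of the variance is implicit in the statement); moreover $f_n(0)$ increases to the smallest fixed point of $f$ in $[0,1]$, which in the critical case equals $1$, while $q_n>0$ for every $n$ because $\E[Z_n]=m^n=1$. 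Hence $q_n\downarrow 0$.

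The heart of the matter is the behaviour of $f$ near $1$. Because $\sum_{k\ge 2}k(k-1)p(k)=\sigma^2<\infty$, the power series for $f''$ converges uniformly on $[0,1]$ (Weierstrass $M$-test with $M_k=k(k-1)p(k)$), so $f\in C^2([0,1])$ with $f(1)=1$, $f'(1)=1$ and $f''(1)=\sigma^2$. Taylor's theorem then gives, as $x\downarrow 0$,
\[
1-f(1-x)=x-\tfrac{\sigma^2}{2}x^2+o(x^2).
\]
Substituting $x=q_{n-1}$ into the recursion $q_n=1-f_n(0)=1-f(1-q_{n-1})$ yields $q_n=q_{n-1}-\tfrac{\sigma^2}{2}q_{n-1}^2+o(q_{n-1}^2)$, and therefore, using $q_{n-1}\to 0$,
\[
\frac{1}{q_n}-\frac{1}{q_{n-1}}=\frac{q_{n-1}-q_n}{q_{n-1}q_n}=\frac{\tfrac{\sigma^2}{2}q_{n-1}^2+o(q_{n-1}^2)}{q_{n-1}^2\bigl(1-\tfrac{\sigma^2}{2}q_{n-1}+o(q_{n-1})\bigr)}\longrightarrow\frac{\sigma^2}{2}.
\]

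Finally I would invoke the Stolz–Cesàro theorem applied to $b_n:=1/q_n$: from $b_n-b_{n-1}\to\sigma^2/2$ it follows that $b_n/n\to\sigma^2/2$, that is, $n\,\P(Z_n>0)=nq_n\to 2/\sigma^2$, which is the claim. The computation is routine once the local expansion is in place; the point that genuinely needs care — and which I would flag as the main obstacle — is the justification that $f$ is $C^2$ up to the boundary point $s=1$ (so that the second-order Taylor expansion carries a true $o(x^2)$ remainder) together with the fact $q_n\to 0$, which is exactly what makes the remainder negligible after telescoping. For completeness I would remark that if one drops the hypothesis $\sigma^2<\infty$ the same scheme still shows $n\,\P(Z_n>0)\to 0$, consistent with reading $2/\sigma^2$ as $0$, but that refinement is not needed here.
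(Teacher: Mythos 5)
Your argument is correct and is precisely the classical Kolmogorov--Yaglom proof (iteration of the generating function, second-order Taylor expansion at $s=1$ justified by $\operatorname{Var}(Z_1)<\infty$, then Ces\`aro/Stolz on $1/q_n$), which is the proof given in the reference the paper cites for this theorem (Athreya and Ney, Chapter 1, Section 9, Theorem 1); the paper itself supplies no proof beyond that citation. The one hypothesis you correctly flag as implicit, finiteness of $\operatorname{Var}(Z_1)$, is indeed needed and is tacitly assumed in the statement.
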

A proof can be found in \cite[Chapter 1, Section 9, Theorem 1]{AN04}.

Cramér's theorem gives the exponential decay rate of the probability $\P(S_n \geq xn)$ for $x>0$. The following theorem gives the precise asymptotics for this probability.

\begin{theorem} \label{theorem_RW_precise_asyptotics}
Let $x>0$ and $I(x)< \infty$. There exists an explicit constant $c>0$ such that
\begin{equation*}
\limn \sqrt{n}e^{I(x)n} \P \Bigl( \frac{S_n}{n} \geq x \Bigr) = c.
\end{equation*}
\end{theorem}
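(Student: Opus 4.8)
The plan is to apply the classical exponential change of measure (Cramér tilting) followed by a local central limit theorem; this is the Bahadur--Rao sharp large deviation asymptotics. Write $\varphi(\lambda)=\E[e^{\lambda X_1}]$ and $\Lambda(\lambda)=\log\varphi(\lambda)$, so that $I(x)=\sup_{\lambda}\bigl(\lambda x-\Lambda(\lambda)\bigr)$. Since $x>0=\E[X_1]$, the map $\lambda\mapsto \lambda x-\Lambda(\lambda)$ is concave with positive derivative at $0$, and the first step is to verify --- using Assumption~\ref{assumption_RW} (which makes $\Lambda$ finite and real-analytic near $0$) together with the steepness of $\Lambda$ at the right edge of its effective domain --- that the supremum is attained at a unique $\lambda_x\in(0,\infty)$ lying in the \emph{interior} of $\{\lambda:\varphi(\lambda)<\infty\}$ and characterised by $\Lambda'(\lambda_x)=x$; then $I(x)=\lambda_x x-\Lambda(\lambda_x)$, and $\sigma^2:=\Lambda''(\lambda_x)$ is finite and strictly positive because $X_1$ is non-degenerate. (Interiority also guarantees that the tilted step below has finite moments of all orders, as needed for the Edgeworth estimates; the only non-generic situation, $\lambda_x$ on the boundary of the domain, requires a standard separate argument and does not occur in the applications of this theorem in the paper.)

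Define the tilted law $\tilde\P$ under which $(X_i)_i$ are i.i.d.\ with $\tilde\P(X_1\in dy)=e^{\lambda_x y-\Lambda(\lambda_x)}\,\P(X_1\in dy)$; under $\tilde\P$ the step has mean $\Lambda'(\lambda_x)=x$ and variance $\sigma^2$. The change-of-measure identity
\[
\P\Bigl(\tfrac{S_n}{n}\geq x\Bigr)=e^{n\Lambda(\lambda_x)}\,\tilde\E\bigl[e^{-\lambda_x S_n};\,S_n\geq xn\bigr]=e^{-nI(x)}\,\tilde\E\bigl[e^{-\lambda_x(S_n-xn)};\,S_n-xn\geq 0\bigr]
\]
reduces the theorem to proving $\sqrt n\,\tilde\E\bigl[e^{-\lambda_x(S_n-xn)};\,S_n-xn\geq 0\bigr]\to c$, where in the non-lattice case
\[
c=\frac{1}{\lambda_x\,\sigma\,\sqrt{2\pi}}.
\]

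To evaluate the tilted expectation one uses that, under $\tilde\P$, the recentred sum $S_n-xn$ has mean $0$ and variance $n\sigma^2$, while the factor $e^{-\lambda_x(S_n-xn)}$ on $\{S_n-xn\geq 0\}$ concentrates the relevant mass in an $O(1)$ window just to the right of $xn$ --- i.e.\ at deviations of order $1\ll\sqrt n$ --- where the local density of $S_n$ under $\tilde\P$ is $\bigl(\sigma\sqrt{2\pi n}\bigr)^{-1}(1+o(1))$ by Stone's local limit theorem. Concretely, I would split the expectation at a fixed level $K$: on $0\leq S_n-xn\leq K$ the local CLT (the Gaussian factor being $1+o(1)$ uniformly there) yields $\sqrt n\,\tilde\E[\,\cdot\,;\,0\leq S_n-xn\leq K]\to \sigma^{-1}(2\pi)^{-1/2}\int_0^K e^{-\lambda_x y}\,dy$, while the contribution of $S_n-xn>K$ is bounded, uniformly in $n$, by $C\,e^{-\lambda_x K}$ via a uniform local estimate $\sup_y\sqrt n\,\tilde\P(S_n-xn\in[y,y+1))\leq C$ (Berry--Esseen/Edgeworth). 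Letting $K\to\infty$ and using $\int_0^\infty e^{-\lambda_x y}\,dy=\lambda_x^{-1}$ gives $c$. In the lattice case of span $h$, the integral is replaced by $\sum_{k\geq 0}e^{-\lambda_x h k}=(1-e^{-\lambda_x h})^{-1}$ and the prefactor by $h\bigl(\sigma\sqrt{2\pi n}\bigr)^{-1}$, so $c=h\bigl(\sigma\sqrt{2\pi}(1-e^{-\lambda_x h})\bigr)^{-1}$, with a periodic correction (or a restriction to a suitable subsequence of $n$) when $xn$ does not sit on the lattice.

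The main obstacle is the quantitative local-CLT input and the interchange of limits it must support: one needs $\tilde\P(S_n-xn\in dy)\approx(\sigma\sqrt{2\pi n})^{-1}\,dy$ to hold uniformly enough in $y$ --- over a window that may grow with $n$ --- to pass $\lim_n$ through the exponentially weighted integral or sum, and one must keep the non-lattice and lattice cases rigorously apart, the constant (and even the clean existence of the limit) being most delicate in the lattice case. The preliminary check that $\lambda_x$ is interior to the domain of $\varphi$ and that $\sigma^2>0$ is routine but should be recorded, since both are used. A complete account of all these estimates can be found in the literature on sharp large deviations, e.g.\ in \cite{DZ10} and the references therein.
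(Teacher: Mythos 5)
The paper offers no proof of this statement: it is the Bahadur--Rao theorem and is quoted directly from \cite[Theorem 3.7.4]{DZ10}. Your sketch --- exponential tilting at the unique interior point $\lambda_x$ with $\Lambda'(\lambda_x)=x$, followed by a local-CLT evaluation of the tilted expectation, with the lattice and non-lattice cases kept separate --- is precisely the standard proof of that cited result, so it is consistent with (indeed, more detailed than) what the paper provides.
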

A proof can be found in \cite[Theorem 3.7.4]{DZ10}. Furthermore, we need some properties of the rate function $I$.

\begin{lemma} \label{lemma_prop_I}
Assume that there exists $x \in \R$ such that $I(x) < \infty$ and $I(x + \varepsilon) = \infty$ for all $\varepsilon>0$. Then $\P(X_1 >x)=0$ and $\P(X_1=x)=e^{-I(x)}$.
\end{lemma}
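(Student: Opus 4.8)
The plan is to read off the hypothesis through the moment generating function $\varphi(\lambda) = \E[e^{\lambda X_1}]$ and the structure of the Legendre transform. The assumption is that $I$ is finite at $x$ but jumps to $+\infty$ immediately to the right of $x$; I want to conclude that $x$ is the essential supremum of $X_1$ and that the distribution puts mass exactly $e^{-I(x)}$ on the point $x$. First I would recall the standard dichotomy for $I(y) = \sup_\lambda(\lambda y - \log\varphi(\lambda))$: for $y$ above the mean, $I(y) < \infty$ if and only if either the supremum is attained at some finite $\lambda_y$ with $\varphi'(\lambda_y)/\varphi(\lambda_y) = y$, or $\varphi(\lambda) < \infty$ for all $\lambda > 0$ and $y$ is the right endpoint of the support, i.e.\ $y = x_{\max} := \operatorname{ess\,sup} X_1$, in which case $I(x_{\max}) = \lim_{\lambda\to\infty}(\lambda x_{\max} - \log\varphi(\lambda)) = -\log\P(X_1 = x_{\max})$ (with the convention that this is $+\infty$ when $\P(X_1 = x_{\max}) = 0$).

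The key step is to argue that the hypothesis ($I(x) < \infty$, $I(x+\varepsilon) = \infty$ for all $\varepsilon > 0$) forces us into the second branch of this dichotomy with $x = x_{\max}$ and $\P(X_1 = x_{\max}) > 0$. Indeed, if $\P(X_1 > x) > 0$, then $x$ is not the right endpoint and there are two sub-cases: if $x_{\max} < \infty$, then $I$ is finite on all of $[\,\E[X_1], x_{\max}]$ (the Legendre transform of a function finite on a neighbourhood of $0$ is finite on the interior of the support's convex hull, and finite at the endpoint by the displayed limit formula — here it could even be $+\infty$ at $x_{\max}$, but it is certainly finite on $(\,\E[X_1], x_{\max})$, which contains points $x + \varepsilon$), contradicting $I(x+\varepsilon) = \infty$; if $x_{\max} = \infty$, then $\varphi(\lambda) = \infty$ for some $\lambda > 0$ (otherwise $I$ would be finite everywhere on $(0,\infty)$ by the attainment case), but then by Assumption \ref{assumption_RW} $\varphi$ is finite on an interval $(-\varepsilon_0, \lambda_0)$ for some $\lambda_0 > 0$, and a convexity/steepness argument shows $I$ is finite on a corresponding interval to the right of $x$ as long as $\P(X_1 > x) > 0$ — again a contradiction. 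Hence $\P(X_1 > x) = 0$, so $x = x_{\max}$. Then $I(x) < \infty$ together with the limit formula $I(x_{\max}) = -\log\P(X_1 = x_{\max})$ yields $\P(X_1 = x) = e^{-I(x)}$, and in particular this probability is positive.

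The main obstacle I anticipate is handling the case $x_{\max} = \infty$ cleanly, i.e.\ ruling out that $I$ stays finite just past $x$ when the moment generating function blows up at some positive $\lambda$. The cleanest route is probably to avoid sub-casing on $x_{\max}$ altogether: assume for contradiction $\P(X_1 > x) > 0$, pick $\delta > 0$ with $p_\delta := \P(X_1 \ge x + \delta) > 0$, and directly lower-bound, for any $\lambda \ge 0$ in the domain of $\varphi$, using only that a variable bounded above by $x+\varepsilon$ has a certain spread — actually the slickest argument is: since $I(x) < \infty$, by the variational formula there is $\lambda^* \ge 0$ (the relevant regime is the mean-exceeding one, so $\lambda^* \ge 0$) with $\lambda^* x - \log\varphi(\lambda^*) > -\infty$ and $\varphi(\lambda^*) < \infty$; because $\varphi$ is finite on a neighbourhood of $0$, the set $\{\lambda : \varphi(\lambda) < \infty\}$ is an interval containing $[0,\lambda^*]$, and on its interior $\varphi$ is differentiable with $\psi(\lambda) := (\log\varphi)'(\lambda) = \E[X_1 e^{\lambda X_1}]/\varphi(\lambda)$ the mean of the tilted law. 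If $\P(X_1 > x) > 0$ then $\psi(\lambda) \to \operatorname{ess\,sup} X_1 > x$ as $\lambda$ increases to the right end of the interval, so $\psi$ takes a value in $(x, x+\varepsilon)$ at some admissible $\lambda$, and at that $\lambda$ the supremum defining $I$ at that value is attained and finite — contradicting $I(x+\varepsilon) = \infty$. This reduces everything to one monotone-continuity statement about the tilted mean, which I would state as a short sub-claim and prove by dominated convergence. Once the contradiction is reached, the identification $\P(X_1 = x) = e^{-I(x)}$ is immediate from $I(x) = \sup_{\lambda \ge 0}(\lambda x - \log\varphi(\lambda)) = \lim_{\lambda\to\infty}(\lambda x - \log\varphi(\lambda)) = -\log\P(X_1 = x)$, using $\varphi(\lambda) e^{-\lambda x} \to \P(X_1 = x)$ by monotone convergence since $X_1 \le x$ a.s.
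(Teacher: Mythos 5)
Your identification of $\P(X_1=x)$ is correct and is essentially the computation in the paper: once $X_1\le x$ a.s., the map $\lambda\mapsto \lambda x-\log\E[e^{\lambda X_1}]=-\log\E\bigl[e^{\lambda(X_1-x)}\bigr]$ is nondecreasing on $[0,\infty)$ and converges to $-\log\P(X_1=x)$, giving $I(x)=-\log\P(X_1=x)$. The problem is in your proof of the first claim, $\P(X_1>x)=0$. The step you rely on --- ``if $\P(X_1>x)>0$ then the tilted mean $\psi(\lambda)=(\log\varphi)'(\lambda)$ tends to $\operatorname{ess\,sup}X_1$ as $\lambda$ increases to the right end of the domain of $\varphi$, so the supremum defining $I(y)$ is attained for some $y\in(x,x+\varepsilon)$'' --- fails when $\varphi$ is finite at a finite right endpoint $\lambda_{\max}$ of its domain but not steep there. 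For a law with density proportional to $e^{-t}(1+t)^{-3}$ on $[0,\infty)$ (suitably completed on the negative half-line and centred), $\varphi(\lambda)<\infty$ exactly for $\lambda$ in an interval with right endpoint $1$ included, $\varphi'(1^-)<\infty$, and $\psi(\lambda)\uparrow\psi(1)<\infty=\operatorname{ess\,sup}X_1$. For targets $y>\psi(1)$ no tilt has mean $y$ and the supremum in \eqref{def_I} is not attained, so your argument produces no contradiction; yet $I(y)=y-\log\varphi(1)$ is finite there, which is the fact you actually need. The ``standard dichotomy'' in your first paragraph is misstated for the same reason: finiteness of $I(y)$ requires neither attainment of the supremum nor that $y$ be the right endpoint of the support.

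The correct argument is the one you sketch and then abandon, and it is the paper's entire proof of the first claim: if $\P(X_1\ge x+\varepsilon)>0$, then for $\lambda\ge 0$ one has $\varphi(\lambda)\ge e^{\lambda(x+\varepsilon)}\P(X_1\ge x+\varepsilon)$, hence $\lambda(x+\varepsilon)-\log\varphi(\lambda)\le-\log\P(X_1\ge x+\varepsilon)$, while for $\lambda<0$ Jensen's inequality gives $\lambda(x+\varepsilon)-\log\varphi(\lambda)\le\lambda(x+\varepsilon)\le 0$ (note $x\ge\E[X_1]=0$ because $I(0)=0<\infty$ forces $x\ge 0$). Thus $I(x+\varepsilon)\le-\log\P(X_1\ge x+\varepsilon)<\infty$, contradicting the hypothesis. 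This bound needs no differentiability, no steepness, and no case distinction on the domain of $\varphi$; replacing your tilted-mean step by it closes the gap and brings your proof in line with the paper's.
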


\begin{proof} 
Let $x \in \R$ such that $I(x) < \infty$ and $I(x + \varepsilon) = \infty$ for all $\varepsilon>0$. Assume that $\P(X_1 >x)>0$. Then, there exists $\varepsilon>0$ such that $\P(X_1 \geq x+ \varepsilon)>0$. However,
\begin{align} \label{proof_prop_I_1}
I(x+ \varepsilon)&= \sup_{\lambda \in \R} \bigl(\lambda (x+\varepsilon) - \log \E \bigl[e^{\lambda X_1} \bigr]\bigr) \nonumber\\
&\leq \sup_{\lambda \in \R} \bigl(\lambda (x+\varepsilon)- \log \bigl( e^{\lambda (x+\varepsilon)} \P(X_1 \geq x + \varepsilon) \bigr) \nonumber\\
& = - \log \P(X_1 \geq x + \varepsilon) < \infty,
\end{align}
which leads to a contradiction. It remains to show that $\P(X_1=x)=e^{-I(x)}$. Analogously to \eqref{proof_prop_I_1}, we get
\begin{equation*}
I(x) \leq \sup_{\lambda \in \R} \bigl(\lambda x- \log \bigl( e^{\lambda x} \P(X_1 =x) \bigr) = - \log \P(X_1=x).
\end{equation*}
Moreover, since $\P(X_1 > x)=0$ we have for all $\varepsilon>0$
\begin{equation*}
I(x) \geq \sup_{\lambda \in \R} \bigl(\lambda x- \log \bigl( e^{\lambda x} \P(X_1 \in (x-\varepsilon, x] + e^{\lambda (x - \varepsilon)} \bigr).
\end{equation*}
Letting $\lambda \to \infty$ and $\varepsilon \to 0$ shows that
\begin{equation*}
I(x) \geq  - \log \P(X_1=x),
\end{equation*}
which finishes the proof.
\end{proof}

\section{Proofs} 
In this section we prove the main results of the paper.

\subsection{Independent random walks} \label{chapter_brw_proofs_indRW}
\begin{proof}[Proof of Theorem \ref{theorem_ind_RW}]
\textbf{1. Case}: $x > x^*$

Following the strategy explained in Section~3, independence of the random walks and (U2) yields
\begin{align} \label{proof_thm_ind_RW_1}
\P^* \Bigl( \frac{\tilde{M}_n}{n} \geq x \Bigr)
& = \E^* \biggr[ 1- \Bigr(1- \P \Bigl( \frac{S_n}{n} \geq x \Bigr) \Bigr)^{Z_n} \biggr] \nonumber\\
& \geq \P^* \Bigl(Z_n \geq \frac{1}{2} m^n \Bigr) \cdot \E \biggr[ 1- \Bigr(1- \P \Bigl( \frac{S_n}{n} \geq x \Bigr) \Bigr)^{\frac{1}{2} m^n} \biggr] \nonumber \\
& \geq \P^* \Bigl(W_n \geq \frac{1}{2} \Bigr) \P \Bigl( \frac{S_n}{n} \geq x \Bigr) \frac{1}{2} m^n \Bigl( 1- \P \Bigl( \frac{S_n}{n} \geq x \Bigr) \frac{1}{2} m^n \Bigr).
\end{align}
By Cramér's theorem, $\P \bigl( \frac{S_n}{n} \geq x \bigr) \frac{1}{2} m^n \to  0$ as $n \to \infty$, since $\log m < I(x)$. For the first factor on the right hand side of \eqref{proof_thm_ind_RW_1} we have $\liminf_{n \to \infty} \P^* (W_n \geq \frac{1}{2} ) \geq \P^*(W > \frac{1}{2}) >0$, since $\E^*[W]\geq \E[W]=1$ by Theorem~\ref{Lemma_GWP_martingale}. Together with \eqref{sevterms} we conclude 
\begin{equation*}
\P^* \Bigl( \frac{\tilde{M}_n}{n} \geq x \Bigr) \geq \exp \bigl( - (I(x) - \log m )n + o(n) \bigr),
\end{equation*}
which yields the lower bound. For the upper bound, the Markov inequality yields
\begin{align}\label{proof_thm_ind_RW_2}
\P^* \Bigl( \frac{\tilde{M}_n}{n} \geq x \Bigr) = \P^* \Bigl( \sum_{i=1}^{Z_n} \mathds{1}_{\{S_n^i \geq nx\}} \geq 1 \Bigr) \leq \P \Bigl( \frac{S_n}{n} \geq x \Bigr)\E^*[Z_n] = \P \Bigl( \frac{S_n}{n} \geq x \Bigr) \frac{m^n}{1-q},
\end{align}
which immediately implies the claim.

\textbf{2. Case}: $0 < x < x^*$

Since the rate function $I$ is strictly increasing on the interval $[0,x^*]$, we can choose $\varepsilon>0$ such that $\varepsilon < I(x) < \log m - \varepsilon$. We prove the upper bound first. Using the inequality $1-y \leq e^{-y}$ and Theorem~\ref{lemma_LDP_Zn}, we have for $n$ large enough
\begin{align*}
\P^* \Bigl(\frac{\tilde{M}_n}{n} \leq x \Bigr)
& = \E^* \biggl[\Bigl(1-\P \Bigl(\frac{S_n}{n} > x \Bigr)\Bigr)^{Z_n} \biggr] \leq \E^* \biggl[\exp \Bigl(-\P \Bigl(\frac{S_n}{n} > x \Bigr) Z_n \Bigr) \biggr]\\
& \leq \P^* \Bigl(Z_n \leq e^{(I(x)+\varepsilon)n} \Bigr) + \exp \bigr(-e^{\varepsilon n+o(n)} \bigr)\\
&= \exp \Bigl(- \bigl( I^{\text{GW}}(I(x) +\varepsilon \bigr) n + o(n) \Bigr).
\end{align*}
Letting $\varepsilon \to 0$ yields the upper bound. Note that $I^{\text{GW}}$ defined in \eqref{def_I_GW} is continuous. The proof for the lower bound is similar. More precisely, since $\P (\frac{S_n}{n} > x) < e^{-1}$ for $n$ large enough, (U1) yields for $n$ large enough
\begin{align*}
\P^* \Bigl(\frac{\tilde{M}_n}{n} \leq x \Bigr)
& = \E^* \biggl[\Bigl(1-\P \Bigl(\frac{S_n}{n} > x \Bigr)\Bigr)^{Z_n} \biggr] \geq \E^* \biggl[\exp \Bigl(-e \cdot \P \Bigl(\frac{S_n}{n} > x \Bigr) Z_n \Bigr) \biggr]\\
& \geq \P^* \Bigl(Z_n \leq e^{(I(x)-\varepsilon)n} \Bigr) \cdot \exp \bigr(-e^{-\varepsilon n+o(n)}  \bigr)\\
&= \exp \Bigl(- I^{\text{GW}}(I(x) - \varepsilon ) n + o(n) \Bigr).
\end{align*}
Letting $\varepsilon \to 0$ yields the lower bound.

\textbf{3. Case}: $x \leq 0$

We first consider $x <0$. For the upper bound we have for $K \in \N$
\begin{equation} \label{proof_thm_ind_RW_3}
\P^* \Bigl(\frac{\tilde{M}_n}{n} \leq x \Bigr)
= \E^* \biggl[\P \Bigl(\frac{S_n}{n} \leq x \Bigr)^{Z_n} \biggr] \leq \sum_{k=1}^K \P \Bigl(\frac{S_n}{n} \leq x \Bigr)^k \P^*(Z_n=k) + \P \Bigl(\frac{S_n}{n} \leq x \Bigr)^K.
\end{equation}
By Theorem~\ref{lemma_LDP_rho}, the probability $\P(Z_n=k)$ is of order $\exp(-\rho n + o(n))$ for all $k \in A$ (defined in \eqref{def_A}) and $\P(Z_n=k)=0$ otherwise. For all $K \in \N$, \eqref{sevterms} yields 
\begin{equation*}
\limsup_{n \to \infty} \frac{1}{n} \log \P^* \Bigl(\frac{\tilde{M}_n}{n} \leq x \Bigr) 
\leq  \max \bigl\{-(k^*I(x)+\rho), -KI(x) \bigr\}. 
\end{equation*}
Hence, letting $K \to \infty$ proves the upper bound. Note that $I(x)>0$ for $x<0$.  As in the proof of \eqref{proof_thm_ind_RW_3} we have
\begin{align*}
\P^* \Bigl(\frac{\tilde{M}_n}{n} \leq x \Bigr)
& = \E^* \biggl[\P \Bigl(\frac{S_n}{n} \leq x \Bigr)^{Z_n} \biggr] \geq \P \Bigl(\frac{S_n}{n} \leq x \Bigr)^{k^*} \cdot \P(Z_n=k^*)\\
&= \exp \bigl( -(k^*I(x)+\rho)n +o(n) \bigr),
\end{align*}
which shows the lower bound. For $x=0$ the result follows from continuity of the rate function $I^\textnormal{ind}$ at 0.

\textbf{4. Case}: $x=x^*$\\
In the same was as in  \eqref{proof_thm_ind_RW_2},
\begin{equation} \label{proof_thm_ind_RW_4}
\P \Bigl( \frac{\tilde{M}_n}{n} \leq x^* \Bigr) = 1- \P \Bigl( \frac{\tilde{M}_n}{n} > x^* \Bigr) \geq 1- \P \Bigl( \frac{S_n}{n} > x^* \Bigr) \frac{m^n}{1-q}.
\end{equation}
Now we have to distinguish two cases. If $I(x^*) = \log m$, then the right hand side of \eqref{proof_thm_ind_RW_4} converges to 1 as $n \to \infty$ by Theorem~\ref{theorem_RW_precise_asyptotics}. If $I(x^*) < \log m$, then $I(x)=\infty$ for all $x>x^*$ and therefore $\P(X_1>x^*)=0$ by Lemma~\ref{lemma_prop_I}. Hence, the right hand side of \eqref{proof_thm_ind_RW_4} equals 1. In both cases we get
\begin{equation*}
\limn \frac{1}{n} \log \P \Bigl( \frac{\tilde{M}_n}{n} \leq x^* \Bigr)=0.
\end{equation*}
Since $\P (\tilde{M}_n \geq x^* n ) \geq \P (M_n \geq x^* n )$, it remains to show that $\P(M_n \geq x^* n )$ decays slower than exponentially in $n$. If $I(x)< \infty$ for some $x>x^*$, then the rate function $I^{\text{BRW}}(x)$ is continuous from the right at $x=x^*$. Since $I^{\text{BRW}}(x) \to 0$ as $x \searrow x^*$ in this case, the claim follows. Therefore assume that $I(x)= \infty$ for all $x>x^*$. By Lemma~\ref{lemma_prop_I} we have $\P(X_1=x^*)=e^{-I(x^*)}$ in this case. Consider the following embedded process. Every particle with step size smaller than $x^*$ at any time is killed. Therefore, the reproduction mean in every step is $\P(X_1=x^*) m \geq 1$. Let $q_n$ be the extinction probability of this process at time $n$. By Theorem~\ref{theorem_survival_critical}, $q_n$ decays slower than exponentially in $n$. Since $\P (M_n \geq x^* n ) \geq q_n$, the claim follows.
\end{proof}


\subsection{Branching random walk} \label{chapter_brw_proofs_BRW}
Before proving Theorem~\ref{theorem_BRW}, we first show that the maximum of independent random walks stochastically dominates the maximum of the branching random walk.
\begin{lemma} \label{lemma_BRW_vs_ind_RW1}
Let $(X_i)_{i \in \N}$ and $(Y_i)_{i \in \N}$ be independent sequences of (not necessarily independent) random variables. Furthermore, assume that the random variables $Y_i, i \in \N$, have the same distribution. Then we have for all $k \in \N$ and $x \in \R$
\begin{equation*}
\P \Bigl( \max_{i \in \{1, \ldots, k\}} \{X_i+Y_i\} \leq x \Bigr) \leq \P \Bigl( \max_{i \in \{1, \ldots, k\}} \{X_i+Y_1\} \leq x \Bigr).
\end{equation*}
In other words, $\max_{i \in \{1, \ldots, k\}} \{X_i+Y_1\} \preceq  \max_{i \in \{1, \ldots, k\}} \{X_i+Y_i\}$
where we write ``$\preceq$'' for the usual stochastic domination.
\end{lemma}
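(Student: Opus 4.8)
The plan is to condition on the sequence $(X_i)_{i \in \N}$ and then exploit a simple coupling together with the FKG-type observation that "all events of the form $\{Y_i \le c_i\}$ are increasing in the same direction." More precisely, fix $k \in \N$ and $x \in \R$. Since $(X_i)$ and $(Y_i)$ are independent, it suffices to show the inequality after conditioning on $X_1 = a_1, \ldots, X_k = a_k$, i.e.~to prove
\[
\P \Bigl( \bigcap_{i=1}^k \{ Y_i \le x - a_i \} \Bigr) \le \P \Bigl( \bigcap_{i=1}^k \{ Y_1 \le x - a_i \} \Bigr),
\]
and then integrate over the law of $(X_1,\dots,X_k)$. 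Writing $c_i = x - a_i$ and $c = \min_{1\le i\le k} c_i$, the right-hand side is just $\P(Y_1 \le c)$ because the $Y_i$ are identically distributed, so the right-hand side collapses to $\P\bigl(\bigcap_{i=1}^k\{Y_1\le c_i\}\bigr)$ with a single random variable. The left-hand side is $\P\bigl(\bigcap_{i=1}^k\{Y_i\le c_i\}\bigr)$.

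The key step is then the elementary bound
\[
\P \Bigl( \bigcap_{i=1}^k \{ Y_i \le c_i \} \Bigr) \le \P \bigl( Y_j \le c_j \bigr) \quad \text{for every fixed } j,
\]
since the intersection is contained in each individual event. Choosing $j$ to be an index attaining the minimum $c_j = c = \min_i c_i$ and using that $Y_j \overset{d}{=} Y_1$ gives
\[
\P \Bigl( \bigcap_{i=1}^k \{ Y_i \le c_i \} \Bigr) \le \P(Y_1 \le c) = \P \Bigl( \bigcap_{i=1}^k \{ Y_1 \le c_i \} \Bigr),
\]
which is exactly the conditional inequality we wanted. Integrating over $(X_1,\dots,X_k)$ yields the statement, and the reformulation in terms of $\preceq$ is immediate from the definition of the distribution functions of $\max_i\{X_i+Y_i\}$ and $\max_i\{X_i+Y_1\}$ (both evaluated at an arbitrary $x$).

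I do not anticipate a genuine obstacle here: the whole point is that replacing the i.i.d.\ copies $Y_i$ by a single shared copy $Y_1$ can only make a simultaneous "all below a threshold" event more likely, because the shared-copy event is an intersection of nested events and hence equals its most restrictive member. The only thing to be careful about is the measure-theoretic bookkeeping of the conditioning (legitimate by independence of the two sequences and Fubini), and the observation that no independence or even identical distribution of the $X_i$ is needed — only that the $Y_i$ share a common law. If one prefers to avoid conditioning language altogether, the same argument can be run directly: $\max_i\{X_i+Y_i\} \le x$ implies $X_j + Y_j \le x$ for the (random) index $j$ minimizing $c_i = x - X_i$, but it is cleanest to fix the $X$'s first so that $j$ is deterministic.
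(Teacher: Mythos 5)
Your proof is correct and is essentially the paper's argument in conditioned form: the paper picks the (random) index $i^*$ maximizing $X_i$ and uses $\P(X_{i^*}+Y_{i^*}\le x)=\P(X_{i^*}+Y_1\le x)$, which is exactly your observation that the intersection $\bigcap_i\{Y_i\le c_i\}$ is contained in its most restrictive member while $\bigcap_i\{Y_1\le c_i\}$ equals it. No gaps; the conditioning/Fubini step is justified by the independence of the two sequences, as you note.
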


\begin{proof}
Let $i^*$ be the smallest (random) index such that $X_{i^*}=\max_{i \in \{1, \ldots, k\}} X_i$. We have
\begin{equation*}
\P \Bigl( \max_{i \in \{1, \ldots, k\}} \{X_i+Y_i\} \leq x \Bigr) \leq \P \bigl( X_{i^*}+Y_{i^*} \leq x \bigr) = \P \bigl( X_{i^*}+Y_1 \leq x \bigr). \qedhere
\end{equation*}
\end{proof}
As a consequence we can show that the maximum of independent random walks stochastically dominates the maximum of the branching random walk.

\begin{lemma}\label{lemma_BRW_vs_ind_RW2}
We have $ M_n \preceq \tilde{M}_n$ for all $n \in \N$.
\end{lemma}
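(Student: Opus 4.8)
The plan is to prove the stochastic domination $M_n \preceq \tilde{M}_n$ by induction on $n$, using Lemma~\ref{lemma_BRW_vs_ind_RW1} as the key tool at the inductive step. The base case $n=1$ is immediate (indeed trivial), since at time $1$ the positions of the particles in the branching random walk are themselves i.i.d.\ copies of $X_1$, attached to the $Z_1$ offspring of the root, so $M_1$ and $\tilde M_1$ have exactly the same law (both are the maximum of $Z_1$ i.i.d.\ steps, where $Z_1$ is independent of the steps).

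For the inductive step, I would condition on $\mathcal{F}_1 = \sigma(Z_1)$ and decompose the tree at the first generation. Write the root's children as $v_1, \ldots, v_{Z_1}$, with first-step displacements $X_{v_1}, \ldots, X_{v_{Z_1}}$, which are i.i.d.\ and independent of $Z_1$. Each child $v_j$ is the root of an independent subtree, and the rightmost particle of the whole process at time $n$ satisfies $M_n = \max_{1 \le j \le Z_1} \bigl( X_{v_j} + M_{n-1}^{v_j} \bigr)$, where $M_{n-1}^{v_j}$ is the maximum of the branching random walk of depth $n-1$ rooted at $v_j$; these are i.i.d.\ copies of $M_{n-1}$, independent of the $X_{v_j}$ and of $Z_1$. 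By the induction hypothesis, $M_{n-1} \preceq \tilde M_{n-1}$; since stochastic domination is preserved under taking i.i.d.\ copies and under the (monotone) operation $\max_j (X_{v_j} + \cdot)$ applied coordinatewise with the $X_{v_j}$ held fixed, we may replace each $M_{n-1}^{v_j}$ by an independent copy $\tilde M_{n-1}^{(j)}$ of $\tilde M_{n-1}$ without decreasing the maximum in the stochastic order. This gives
\[
M_n \ \preceq\ \max_{1 \le j \le Z_1} \bigl( X_{v_j} + \tilde M_{n-1}^{(j)} \bigr).
\]
Now apply Lemma~\ref{lemma_BRW_vs_ind_RW1} conditionally on $Z_1 = k$, with the role of ``$X_i$'' played by $X_{v_i}$ and the role of ``$Y_i$'' played by $\tilde M_{n-1}^{(i)}$ (an i.i.d.\ family, independent of the $X_{v_i}$): it yields $\max_{i \le k}(X_{v_i} + \tilde M_{n-1}^{(i)}) \preceq \max_{i \le k}(X_{v_i} + \tilde M_{n-1}^{(1)})$. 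Finally, conditionally on $Z_1 = k$, the random variable $\max_{i \le k}(X_{v_i} + \tilde M_{n-1}^{(1)})$ is distributed as the maximum of $k$ i.i.d.\ random walks of length $n$ evaluated at time $n$: the shared last factor $\tilde M_{n-1}^{(1)}$ has the law of $\max_{1\le j \le \tilde Z_{n-1}} \sum_{i=2}^n X_i^j$ where $\tilde Z_{n-1}$ is an independent copy of $Z_{n-1}$, and prepending one more i.i.d.\ step to each of these walks and then taking the maximum over $k$ copies of the root's first step realises exactly $\max_{1 \le j \le Z_n} S_n^j = \tilde M_n$, using that $Z_n$ conditioned on $Z_1 = k$ is a sum of $k$ independent copies of $Z_{n-1}$. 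Unconditioning over $Z_1$ completes the step.

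The main obstacle is bookkeeping rather than a genuine difficulty: one must be careful that the various ``independent copies'' really are independent in the right way, and in particular that when we pass from the branching subtrees to independent-random-walk blocks the dependence structure is exactly the hypothesis of Lemma~\ref{lemma_BRW_vs_ind_RW1} — i.e.\ that the $\tilde M_{n-1}^{(j)}$ are i.i.d.\ and jointly independent of the first-generation displacements $X_{v_j}$ — and that replacing them all by a single shared copy $\tilde M_{n-1}^{(1)}$ correctly reconstitutes $\tilde M_n$ after prepending the root step and using the branching property $Z_n \overset{d}{=} \sum_{j=1}^{Z_1} Z_{n-1}^{(j)}$. I would also note explicitly the elementary fact, used twice, that if $U \preceq V$ with $V$ independent of a random variable $X$ and of a collection over which a coordinatewise maximum is taken, then the maximum built from i.i.d.\ copies of $U$ is stochastically dominated by the one built from i.i.d.\ copies of $V$; this follows by a standard coupling (realise $U \le V$ a.s.\ pointwise in each block). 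With these points pinned down, the induction closes.
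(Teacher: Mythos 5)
Your overall plan (induction on $n$, decomposition at the first generation, Lemma~\ref{lemma_BRW_vs_ind_RW1} at the inductive step) is the same as the paper's, and both the base case and the step $M_n \preceq \max_{1\le j\le Z_1}\bigl(X_{v_j}+\tilde M_{n-1}^{(j)}\bigr)$ with independent copies $\tilde M_{n-1}^{(j)}$ are fine. The inductive step then breaks in two places. First, you apply Lemma~\ref{lemma_BRW_vs_ind_RW1} in the wrong direction: the lemma states $\max_i\{X_i+Y_1\}\preceq\max_i\{X_i+Y_i\}$, i.e.\ the maximum with a \emph{shared} second summand is dominated by the one with \emph{independent} second summands, whereas you assert $\max_{i\le k}(X_{v_i}+\tilde M_{n-1}^{(i)})\preceq\max_{i\le k}(X_{v_i}+\tilde M_{n-1}^{(1)})$, the reverse inequality, which does not follow and is false in general (take $X_{v_i}\equiv 0$). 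Second, the identification of $\max_{i\le k}(X_{v_i}+\tilde M_{n-1}^{(1)})$ with $\tilde M_n$ given $Z_1=k$ is wrong: that random variable equals $\max_{i\le k}X_{v_i}+\tilde M_{n-1}^{(1)}$, the sum of two \emph{independent} maxima built from $k$ first steps and from a single copy of $Z_{n-1}$ many $(n-1)$-step tails, while $\tilde M_n$ given $Z_1=k$ is the maximum of $\sum_{i=1}^k Z_{n-1}^{(i)}$ independent $n$-step walks, each carrying its own first step. For instance, with $n=2$, $Z_1\equiv 2$ and Bernoulli$(1/2)$ steps, $\P\bigl(\max_i X_{v_i}+\tilde M_1^{(1)}=2\bigr)=9/16$ whereas $\P(\tilde M_2=2)=1-(3/4)^4=175/256$, so the two laws differ; your chain therefore does not close.

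The repair is to apply Lemma~\ref{lemma_BRW_vs_ind_RW1} \emph{inside each block} rather than across blocks. Conditionally on $Z_1=k$, write $\tilde M_{n-1}^{(j)}=\max_{l\le N_j}W_l^{(j)}$ with $N_j$ independent copies of $Z_{n-1}$ and $W_l^{(j)}$ independent $(n-1)$-step walks, so that $X_{v_j}+\tilde M_{n-1}^{(j)}=\max_{l\le N_j}\bigl(W_l^{(j)}+X_{v_j}\bigr)$: all $N_j$ walks of block $j$ share the single first step $X_{v_j}$. Lemma~\ref{lemma_BRW_vs_ind_RW1}, applied in block $j$ with the roles of ``$X_i$'' played by the $W_l^{(j)}$ and of ``$Y_i$'' by i.i.d.\ copies of the first step, gives $\max_{l\le N_j}\bigl(W_l^{(j)}+X_{v_j}\bigr)\preceq\max_{l\le N_j}\bigl(W_l^{(j)}+X_1^{j,l}\bigr)$ with independent first steps; doing this independently in each block and taking the maximum over $j\le k$ produces exactly the law of $\tilde M_n$ given $Z_1=k$, since $Z_n$ given $Z_1=k$ is distributed as $\sum_{j=1}^k N_j$. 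This block-wise use of the lemma is what the last two lines of the paper's proof encode.
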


\begin{proof}
We prove this lemma by induction over $n$. For $n=1$ the inequality is obviously true. Assume that the inequality holds for some $n \in \N$. Let $(\tilde{M}_n^1)_{n \in \N}, (\tilde{M}_n^2)_{n \in \N}, \ldots$ and $(M_n^1)_{n \in \N}, (M_n^2)_{n \in \N}, \ldots$ be independent copies of $(\tilde{M}_n)_{n \in \N}$ and $(M_n)_{n \in \N}$, respectively.
Furthermore, let $(X_1^{i,j})_{i,j \in \N}$ be a collection of i.i.d.~random variables such that $X_1^{1,1}$ has the same distribution as $X_1$. For $i \in \{1, \ldots, Z_1\}$, denote by $Z_n^i$ the number of descendants of the $i$-th particle of the first generation at time $n+1$. Note that $Z_n^i$ equals $Z_n$ in distribution. Using first the induction hypothesis and then Lemma~\ref{lemma_BRW_vs_ind_RW1},
\begin{align*}
M_{n+1} 
& \stackrel{d}{=} \max_{i \in \{1, \ldots, Z_1\}} \{X^i_1+M_n^i\}\\
& \preceq  \max_{i \in \{1, \ldots, Z_1\}} \{X^i_1+\tilde{M}_n^i\} \\
& \preceq  \max_{i \in \{1, \ldots, Z_1\}} \max_{j \in \{1, \ldots, Z_n^i\}} \{ X_1^{i,j} + \tilde{M}_n^i\} \\
& \stackrel{d}{=} \tilde{M}_{n+1} . \qedhere
\end{align*}

\end{proof}
The statement of Lemma~\ref{lemma_BRW_vs_ind_RW2} is also true with respect to $\P^*$.

\begin{proof}[Proof of Theorem \ref{theorem_BRW}]

\textbf{1. Case}: $x > x^*$

Recall that $I^{\text{BRW}}(x) = I^{\text{ind}}(x)$ for $x \geq x^*$. Therefore, the upper bound immediately follows from Theorem~\ref{theorem_ind_RW} and Lemma~\ref{lemma_BRW_vs_ind_RW2}. It remains to prove the lower bound. Let $\varepsilon>0$ such that $(1-\varepsilon)I(x)> \log m$. Recall that for $v \in D_{\varepsilon n}$ the rightmost descendant of $v$ at time $n$ is denoted by $M^v_{(1-\varepsilon)n}$. By Lemma~\ref{lemma_BRW_vs_ind_RW1},
\begin{align}
\P^* \Bigl(\frac{M_n}{n} \geq x \Bigr)
& = \P^* \Bigl( \max \limits_{v \in D_{\varepsilon n}} \frac{M_{(1-\varepsilon)n}^v - S_v}{(1-\varepsilon)n} + \frac{S_v}{(1-\varepsilon)n} \geq \frac{x}{1-\varepsilon} \Bigr) \notag \\ 
& \geq \P^* \Bigl( \max \limits_{v \in D_{\varepsilon n}} \frac{M_{(1-\varepsilon)n}^v - S_v}{(1-\varepsilon)n} + \frac{S_{\varepsilon n}}{(1-\varepsilon)n} \geq \frac{x}{1-\varepsilon} \Bigr) \notag \\
& \geq \P^* \Bigl(\max \limits_{v \in D_{\varepsilon n}} \frac{M_{(1-\varepsilon)n}^v - S_v}{(1-\varepsilon)n} \geq x \Bigr) \cdot \P \Bigl( \frac{S_{\varepsilon n}}{\varepsilon n} \geq x \Bigr). \label{proof_theorem_BRW_case_x>x*_1}
\end{align}
It remains to estimate the first probability on the right hand side of \eqref{proof_theorem_BRW_case_x>x*_1}. Therefore, let $A_k$ be the set of infinite subtrees in generation $k$, i.e.~$A_k=\{v \in D_k \colon |D_l^v|>0 \ \forall l \in \N \}$. Note that $(M_{(1-\varepsilon)n}^v - S_v)_{v \in D_{\varepsilon n}}$ are independent under $\P^*$ conditioned on $A_{\varepsilon n}$.
We can now use similar estimates as in the proof of Theorem~\ref{theorem_ind_RW}. More precisely, by independence and (U2) we get 
\begin{align} \label{proof_theorem_BRW_case_x>x*_2}
& \quad \ \P^* \Bigl(\max \limits_{v \in D_{\varepsilon n}} \frac{M_{(1-\varepsilon)n}^v - S_v}{(1-\varepsilon)n} \geq x \Bigr) \notag \\
& = \E^* \Bigl[ \P^* \Bigl(\max \limits_{v \in D_{\varepsilon n}} \frac{M_{(1-\varepsilon)n}^v - S_v}{(1-\varepsilon)n} \geq x \bigm\vert A_{\varepsilon n} \Bigr) \Bigr] \nonumber\\
& = \E^* \Bigl[ 1- \Bigl(1- \P^* \Bigl(\frac{M_{(1-\varepsilon)n}}{(1-\varepsilon)n} \geq x \Bigr) \Bigr)^{|A_{\varepsilon n}|} \Bigr] \nonumber\\
& \geq \P^* \Bigl(Z_{\varepsilon n} \geq \frac{1}{2} m^{\varepsilon n } \Bigr) \cdot \P^*\Bigl(|A_{\varepsilon n}| \geq \frac{(1-q)}{2} Z_{\varepsilon n} \bigm\vert Z_{\varepsilon n} \geq \frac{1}{2} m^{\varepsilon n } \Bigr) \nonumber\\
& \quad \cdot \biggl( 1- \Bigl(1- \P^* \Bigl(\frac{M_{(1-\varepsilon)n}}{(1-\varepsilon)n} \geq x \Bigr) \Bigr)^{\frac{1-q}{4} m^{\varepsilon n}} \biggr) \notag\\
& \geq \P^* \Bigl(W_{\varepsilon n} \geq \frac{1}{2} \Bigr) \cdot \P^*\Bigl(\frac{1}{Z_{\varepsilon n}} \sum_{v \in D_{\varepsilon n}} \mathds{1}_{\{ |D_l^v|>0 \ \forall l \in \N\}} \geq \frac{(1-q)}{2} \bigm\vert Z_{\varepsilon n} \geq \frac{1}{2} m^{\varepsilon n } \Bigr) \nonumber\\
& \quad \cdot \P^* \Bigl(\frac{M_{(1-\varepsilon)n}}{(1-\varepsilon)n} \geq x \Bigr) \frac{1-q}{4} m^{\varepsilon n} \cdot \Bigl(1-\P^* \Bigl(\frac{M_{(1-\varepsilon)n}}{(1-\varepsilon)n} \geq x \Bigr) \frac{1-q}{4} m^{\varepsilon n} \Bigr).
\end{align}
The first probability on the right hand side of \eqref{proof_theorem_BRW_case_x>x*_2} can be estimated as in \eqref{proof_thm_ind_RW_1}. It holds that $\liminf_{n \to \infty} \P^* (W_{\varepsilon n} \geq \frac{1}{2} ) \geq \P^* (W > \frac{1}{2} )>0$. The second probability is at least $\frac{1-q}{2}$ by Lemma~\ref{lemma_sum_rv}. Furthermore, as for \eqref{proof_thm_ind_RW_2}, the Markov inequality and the choice of $\varepsilon$ yields  
\begin{align} \label{proof_theorem_BRW_case_x>x*_3}
1-\P^* \Bigl(\frac{M_{(1-\varepsilon)n}}{(1-\varepsilon)n} \geq x \Bigr) \frac{1-q}{4} m^{\varepsilon n}
& \geq 1 - \P \Bigl(\frac{S_{(1-\varepsilon)n}}{(1-\varepsilon)n} \geq x \Bigr) \frac{m^n}{4} \nonumber \\
& = 1- \exp \Bigl(-n \Bigl((1- \varepsilon)I(x) - \log m \Bigr) +o(n) \Bigr) \to 1. \end{align}
Combining \eqref{proof_theorem_BRW_case_x>x*_1}, \eqref{proof_theorem_BRW_case_x>x*_2} and \eqref{proof_theorem_BRW_case_x>x*_3} shows
\begin{equation*}
\liminf_{n \to \infty} \frac{1}{n} \log \P^* \Bigl(\frac{M_n}{n} \geq x \Bigr) \geq -\varepsilon (I(x) -\log m) + (1- \varepsilon) \liminf_{n \to \infty} \frac{1}{(1- \varepsilon) n} \log \P^* \Bigl(\frac{M_{(1- \varepsilon) n}}{(1-\varepsilon) n} \geq x \Bigr).
\end{equation*}
This implies the lower bound.

\textbf{2. Case}: $x < x^*$

Following the strategy explained in Section~3, there are only $k^*$ particles at time $tn$ and the position of one particle is smaller than its expectation. Afterwards, all particles move and branch as usual. For the lower bound let $t \in (0, \min \{1- \frac{x}{x^*},1 \}]$ and fix $\varepsilon >0$. Note that $t \in (0, 1- \frac{x}{x^*}]$ if $ x>0$ and $t \in (0, 1 ]$ if $x \leq 0$. We have
\begin{align} \label{proof_theorem_BRW_case_x<x*_1}
\P^* \Bigl(\frac{M_n}{n} \leq x \Bigr)
& \geq \P^* \Bigl(\frac{M_n}{n} \leq x \bigm\vert Z_{tn}=k^* \Bigr) \cdot \P^* (Z_{tn}=k^* ) \nonumber\\
& \geq q^{k^*-1} \P^* \Bigl(\frac{S_{tn}+M_{(1-t)n}}{n} \leq x \Bigr) \cdot \P^* (Z_{tn}=k^* ) \nonumber \\
& \geq q^{k^*-1} \P^* \Bigl(\frac{M_{(1-t)n}}{(1-t)n} \leq x^* + \varepsilon \Bigr) \cdot \P\Bigl(\frac{S_{tn}}{n} \leq \left(x-(1-t)(x^*+ \varepsilon) \right) \Bigr) \nonumber\\
& \quad \cdot \P^* (Z_{tn}=k^* ).
\end{align}
Since the first probability on the right hand side of \eqref{proof_theorem_BRW_case_x<x*_1} converges to 1 almost surely as $n \to \infty$ by \eqref{linspeed}, we get
\begin{align*}
\P^* \Bigl(\frac{M_n}{n} \leq x \Bigr) \geq \exp \Bigl( -\Bigl[I \Bigl(t^{-1}(x-(1-t)(x^*+ \varepsilon)) \Bigr)+ \rho \Bigr]tn + o(n) \Bigr).
\end{align*}
Letting $\varepsilon \to 0$ and since this inequality holds for all $t \in (0, \min \{1- \frac{x}{x^*},1 \}]$, we conclude
\begin{align*}
\liminf_{n \to \infty} \frac{1}{n} \log \P^* \Bigl(\frac{M_n}{n} \leq x \Bigr) \geq \sup_{t \in (0, \min \{1- \frac{x}{x^*},1 \}]} - H(x) = -\inf_{t \in (0, 1]}  H(x).
\end{align*}
For the upper bound define
\begin{equation*}
T_n = \inf \Bigl\{t \geq 0 \colon Z_{tn} \geq n^3 \Bigr\}
\end{equation*}
and for $\varepsilon_1>0$ introduce the set
\begin{equation*} 
F=F(\varepsilon_1)= \Bigl\{\varepsilon_1, 2 \varepsilon_1, \ldots, \Bigl\lceil \min \Bigl\{\Bigl(1- \frac{x}{x^*} \Bigr), 1 \Bigr\} \varepsilon_1^{-1} \Bigr\rceil \varepsilon_1 \Bigr\}.
\end{equation*}
By the definition of $T_n$ we then have
\begin{align} \label{proof_theorem_BRW_case_x<x*_2}
& \quad \ \P^* \Bigl(\frac{M_n}{n} \leq x \Bigr) \nonumber\\
& \leq \P^* \Bigl(T_n > \min \Bigl\{\Bigl(1- \frac{x}{x^*} \Bigr), 1 \Bigr\} \Bigr) + \sum_{t \in F} \P^* \Bigl(\frac{M_n}{n} \leq x \bigm\vert T_n \in \bigl(t - \varepsilon_1, t \bigr] \Bigr) \P^* \bigl(T_n \in \bigl(t - \varepsilon_1, t \bigr] \bigr) \nonumber \\
& \leq \P^* \bigl(  Z_{(\min \{(1- \frac{x}{x^*} ), 1 \})n} \leq n^3 \bigr)+ \sum_{t \in F} \P^* \Bigl(\frac{M_n}{n} \leq x \bigm\vert T_n \in \bigl(t - \varepsilon_1, t \bigr] \Bigr) \P^* (  Z_{(t- \varepsilon_1) n} \leq n^3).
\end{align}

Let $\varepsilon_2>0$. Recall that $A_{tn}$ is the set of infinite subtrees in generation $tn$. Using Lemma~\ref{lemma_BRW_vs_ind_RW1} and the same estimate as in \eqref{proof_theorem_BRW_case_x>x*_2},
\begin{align} \label{proof_theorem_BRW_case_x<x*_3}
& \quad \ \P^* \Bigl(\frac{M_n}{n} \leq x \bigm\vert T_n \in \bigl(t - \varepsilon_1, t \bigr] \Bigr) \nonumber \\
& \leq  \P^* \biggl(\max_{v \in D_{tn}} \frac{S_{tn}+ M^v_{(1-t)n}}{n} \leq x \Bigm\vert T_n \in \bigl(t - \varepsilon_1, t \bigr] \biggr) \nonumber \\
& \leq \P \Bigl(\frac{S_{tn}}{n} \leq -\bigl((1-t)(x^*- \varepsilon_2)-x \bigr) \Bigr)+ \P^* \Bigl(\frac{M_{(1-t)n}}{(1-t)n} \leq x^* - \varepsilon_2 \Bigr)^n \notag \\
& \quad \ + \P^* \bigl( Z_{tn} \leq n^2 \bigm\vert  T_n \in (t - \varepsilon_1, t] \bigr) + \P^* \bigl( |A_{tn}| \leq n  \bigm\vert Z_{tn}>n^2, T_n \in (t - \varepsilon_1, t] \bigr).
\end{align}
The second probability on the right hand side of \eqref{proof_theorem_BRW_case_x<x*_3} converges to 0 almost surely by \eqref{linspeed}. Hence, the second term in \eqref{proof_theorem_BRW_case_x<x*_3} decays faster than exponentially in $n$. For the third term on the right hand side of \eqref{proof_theorem_BRW_case_x<x*_3},
\begin{align} \label{proof_theorem_BRW_case_x<x*_4}
\P^* \bigl( Z_{tn} \leq n^2 \bigm\vert  T_n \in (t - \varepsilon_1, t] \bigr)
& \leq \P^* \bigl(\exists k \in \N \colon Z_k \leq n^2 \bigm\vert Z_0=n^3 \bigr) \notag\\
& \leq \binom{n^3}{n^2} q^{n^3-n^2} \leq \exp \bigl( (n^3-n^2) \log q + 3n^2 \log n \bigr).
\end{align}
In the second inequality we used the fact that for the event we consider, at most $n^2$ of the initial $n^3$ Galton-Watson trees may survive. Note that every initial particle produces an independent Galton-Watson tree.
Similarly to \eqref{proof_theorem_BRW_case_x<x*_4}, we get for the fourth term on the right hand side of \eqref{proof_theorem_BRW_case_x<x*_3}
\begin{equation} \label{proof_theorem_BRW_case_x<x*_5}
\P^* \bigl( |A_{tn}| \leq n  \bigm\vert Z_{tn}>n^2, T_n \in (t - \varepsilon_1, t] \bigr) \leq \binom{n^2}{n} q^{n^2-n} \leq \exp \bigl( (n^2-n) \log q + 2n \log n \bigr).
\end{equation}

Combining \eqref{proof_theorem_BRW_case_x<x*_2}, \eqref{proof_theorem_BRW_case_x<x*_3}, \eqref{proof_theorem_BRW_case_x<x*_4} and \eqref{proof_theorem_BRW_case_x<x*_5} and letting $\varepsilon_1, \varepsilon_2 \to 0$, we conclude with \eqref{sevterms} after a straightforward calculation
\begin{equation*}
\limsup_{n \to \infty} \frac{1}{n} \log \P^* \Bigl(\frac{M_n}{n} \leq x \Bigr) \leq
 - \inf_{t \in (0,\min \{1- \frac{x}{x^*},1 \}]} \Bigr\{t \rho + tI\Bigl(-\frac{(1-t)x^*-x}{t} \Bigr) \Bigr\} = -H(x).
\end{equation*}
Note that we could take the limit $\varepsilon_2 \to 0$, since $I$ is continuous from the right on $(0,\infty)$.

\textbf{3. Case}: $x = x^*$

The proof is analogous to the proof of Theorem~\ref{theorem_ind_RW}.

\end{proof}

\paragraph{Acknowledgments} We thank the referee for reading carefully the first version of this article, and for pointing out several inaccuracies. Furthermore, we thank Stefan Junk for discussions regarding the proof of Theorem~\ref{theorem_BRW}.

\bibliographystyle{amsplain}
\bibliography{BRWldp_final}

\Addresses

\end{document}